%
%
%
%
%
\RequirePackage{fix-cm}
\documentclass[smallextended]{svjour3}       
\smartqed  
\usepackage{graphicx}
%
%
\usepackage{algorithm}
\usepackage{algorithmic}
\usepackage{amsmath}
\usepackage{amsfonts}
\usepackage{amssymb}
\usepackage{url}
\usepackage{enumerate}
\usepackage[centering]{geometry}%

%
\providecommand{\U}[1]{\protect\rule{.1in}{.1in}}


%
\journalname{Preprint}
\begin{document}

\title{ParNes: A rapidly convergent algorithm for accurate recovery of sparse and
approximately sparse signals
}

\titlerunning{ParNes: A rapidly convergent algorithm for recovery of sparse signals}        

\author{Ming Gu         \and
        Lek-Heng Lim    \and
        Cinna Julie Wu
}


\institute{M. Gu \at
              Department of Mathematics, University of California at Berkeley, Berkeley, CA 94720-3840, USA\\
              Tel.: 510-642-3145\\
              \email{mgu@math.berkeley.edu}           
           \and
           L.-H. Lim \at
							Department of Statistics, University of Chicago, Chicago, IL 60637-1514, USA\\
							\email{lekheng@galton.uchicago.edu}
					 \and
           C. J. Wu \at
              Department of Mathematics, University of California at Berkeley, Berkeley, CA 94720-3840, USA\\
              \email{cinnawu@math.berkeley.edu}           
}

\date{Received: date / Accepted: date}

\maketitle

\begin{abstract}
In this article, we propose an algorithm, \textsc{nesta}-\textsc{lasso}, for the \textsc{lasso} problem, i.e., an underdetermined linear least-squares problem with a $1$-norm constraint on the solution. We prove under the assumption of the \textit{restricted isometry property} (\textsc{rip}) and a sparsity condition on the solution, that \textsc{nesta}-\textsc{lasso} is guaranteed to be almost always locally linearly convergent. As in the case of the algorithm \textsc{nesta} proposed by Becker, Bobin, and Cand\`{e}s, we rely on Nesterov's accelerated proximal gradient method, which takes $O(\sqrt{1/\varepsilon})$ iterations to come within $\varepsilon > 0$ of the optimal value. We introduce a modification to Nesterov's method that regularly updates the prox-center in a provably optimal manner, and the aforementioned linear convergence is in part due to this modification.

In the second part of this article, we attempt to solve the basis pursuit denoising (\textsc{bpdn}) problem (i.e., approximating the minimum $1$-norm solution to an underdetermined least squares problem) by using \textsc{nesta}-\textsc{lasso} in conjunction with the Pareto root-finding method employed by van den Berg and Friedlander in their \textsc{spgl1} solver. The resulting algorithm is called \textsc{parnes}. We provide numerical evidence to show that it is comparable to currently available solvers.  

\keywords{basis pursuit \and Newton's method \and Pareto curve \and Nesterov's method \and compressed sensing \and convex minimization \and duality \and lasso}
\end{abstract}

\section{Introduction}
\label{sec:1}
We would like to find a solution to the sparsest recovery problem with noise
\begin{equation}
\min\; \lVert x\rVert_{0}\quad\text{s.t.}\quad\lVert Ax-b \rVert_{2}
\leq\sigma. \label{eqnZero}%
\end{equation}
Here, $\sigma$ specifies the noise level, $A$ is an $m$-by-$n$ matrix with $m \ll n$, and $\lVert x\rVert_{0}$ is the number of nonzero entries of $x$. This problem comes up in fields such as image processing \cite{imaging}, seismics
\cite{Seismic1,Seismic2}, astronomy \cite{astro}, and model selection in
regression \cite{LARegression}. Since \eqref{eqnZero} is known to be ill-posed and NP-hard \cite{NP1,NP2}, various convex, $l_{1}$-relaxed formulations are often used.

Relaxing the $0$-norm in \eqref{eqnZero} gives the basis pursuit
denoising (\textsc{bpdn}) problem
\begin{equation}
\textsc{bp}(\sigma)\quad\min\; \lVert x\rVert_{1} \quad\text{s.t.}\quad\lVert
Ax-b\rVert_{2}\leq\sigma. \label{bpsigma}%
\end{equation}
The special case of $\sigma= 0 $ is the basis pursuit problem \cite{BP}.
Two other commonly used $l_{1}$-relaxations are the \textsc{lasso} problem \cite{LS} 
\begin{equation}
\textsc{ls}(\tau)\quad\min\; \lVert Ax-b\rVert_{2} \quad\text{s.t.}\quad\lVert
x\rVert_{1}\leq\tau \label{lasso}%
\end{equation}
and the penalized least-squares problem
\begin{equation}
\textsc{qp}(\lambda)\quad\min\; \lVert Ax-b\rVert_{2}^{2}+\lambda\lVert
x\rVert_{1}%
\end{equation}
proposed by Chen, Donoho, and Saunders \cite{BP}. A large amount of work has been done to show that these formulations give an effective approximation of the solution to \eqref{eqnZero}; see \cite{RIP2,RIP3,RIP1}. In fact, under certain conditions on the sparsity of the solution to \eqref{eqnZero}, these formulations can exactly recover the solution, provided that $A$ satisfies the \textit{restricted isometry property} (\textsc{rip}). 

There is a wide variety of algorithms which solve the
\textsc{bp}$(\sigma)$, \textsc{qp}$(\lambda)$, and \textsc{ls}$(\tau)$
problems. Refer to Section~\ref{sec:SOLVERS} for descriptions of some of the current algorithms. Our work has been motivated by the accuracy and speed of the recent solvers \textsc{nesta} and \textsc{spgl1}. In \cite{Nes2}, Nesterov presents an algorithm to minimize a smooth convex function over a convex set with an optimal convergence
rate. An extension to the nonsmooth case is presented in \cite{Nes1}. \textsc{nesta} solves the \textsc{bp}$(\sigma)$ problem using the nonsmooth version of Nesterov's work. 

For appropriate parameter choices of $\sigma, \lambda,$ and
$\tau$, the solutions of \textsc{bp}$(\sigma)$, \textsc{qp}$(\lambda)$, and
\textsc{ls}$(\tau)$ coincide \cite{SPGL1}. Although the exact dependence is usually hard
to compute \cite{SPGL1}, there are solution
methods which exploit these relationships. The \textsc{matlab} solver \textsc{spgl1} is based on the Pareto root-finding method  \cite{SPGL1} which solves \textsc{bp}$(\sigma)$ by approximately solving a sequence of \textsc{ls}$(\tau)$ problems. In \textsc{spgl1}, the \textsc{ls}$(\tau)$ problems are solved using a spectral projected-gradient (\textsc{spg}) method. 

While we are ultimately interested in solving the \textsc{bpdn} problem in \eqref{bpsigma}, our main result is an algorithm for solving the \textsc{lasso} problem \eqref{lasso}. Our algorithm, \textsc{nesta}-\textsc{lasso} (cf.\ Algorithm~\ref{alg:nesta-lasso}), essentially uses Nesterov's work to solve the \textsc{lasso} problem. We introduce one improvement to Nesterov's original method, namely, we update the prox-center every $K$ steps instead of fixing it throughout the algorithm. With this modification, we prove in Theorem~\ref{thm:OPT} that \textsc{nesta}-\textsc{lasso} is guaranteed to be almost always locally linearly convergent for sufficiently large $K$, as long as the solution is $s$-sparse and $A$ satisfies the restricted isometry property of order $2s$. In fact, Theorem~\ref{thm:OPT} also provides the choice for the optimal $K$. 

Finally, we show that replacing the \textsc{spg} method in the Pareto root-finding procedure, used in \textsc{spgl1}, with our \textsc{nesta}-\textsc{lasso} method leads to an effective method for solving \textsc{bp}$(\sigma)$. We call this modification \textsc{parnes} and compare its efficacy with the state-of-the-art solvers presented in Section~\ref{sec:SOLVERS}.
\subsection{Notation and terminology}
In this paper, a vector is $s$-sparse if it has exactly $s$ nonzero elements. We say that a vector is \textit{at least} $s$-sparse if it has at most $s$ nonzero elements. For a nonzero, $s$-sparse vector $x\in\mathbb{R}^n$, let $I_x$ be the set of indices of the nonzero coefficients of $x$, i.e. the support of $x$; $\overline{x}$ is the vector containing the nonzero elements of $x$. For an $I\subseteq\left\{1,\ldots,n\right\}$, $I^c$ is the complement of $I$. Given a matrix $A\in\mathbb{R}^{m\times n}$ and $I\subseteq\left\{1,\ldots,n\right\}$, $A_I$ is the submatrix of $A$ containing the $j$-th columns of $A$ where $j\in I$. Throughout the paper, we use \textsc{matlab} terminology to describe vectors and matrices. Thus, $x[s:r]$ represents the subvector of $x$ containing elements $s$ to $r$. For a set $S$, let $\operatorname{int}(S)$ be the interior of $S$ and $\partial S$ be the boundary of $S$.   

\subsection{Organization of the paper}
\label{sec:1.1}
In Section~\ref{sec:NESTA-LASSO}, we present and describe the
background of \textsc{nesta}-\textsc{lasso}. We show in Section~\ref{sec:OPT} that, under some reasonable assumptions, \textsc{nesta}-\textsc{lasso} is almost always locally linear convergent. In Section~\ref{sec:PARNES}, we describe the Pareto root-finding procedure behind the \textsc{bpdn} solver \textsc{spgl1} and show how \textsc{nesta}-\textsc{lasso} can be used to solve a subproblem. Section 5 describes some of the available algorithms for solving \textsc{bpdn} and the equivalent \textsc{qp}$(\lambda)$ problem. Lastly, in Section 6, we show in a series of numerical experiments that using \textsc{nesta}-\textsc{lasso} in \textsc{spgl1} to solve \textsc{bpdn} is comparable with current competitive solvers.

\section{NESTA-LASSO\label{sec:NESTA-LASSO}}
We present the main parts of our method to solve the \textsc{lasso} problem. Our algorithm, \textsc{nesta}-\textsc{lasso} (cf.\ Algorithm~\ref{alg:nesta-lasso}), is an application of the accelerated
proximal gradient algorithm of Nesterov \cite{Nes2} outlined in Section~\ref{sec:NESTEROV}. Additionally, we have a prox-center update improving convergence which we describe in Section~\ref{sec:OPT}. In each iteration, we use the fast $l_1$-projector of Duchi et al. \cite{L1proj} given in Section~\ref{sec:1proj}.

\subsection{Nesterov's algorithm\label{sec:NESTEROV}}

Let $Q\subseteq\mathbb{R}^{n}$ be a convex closed set. Let $f:Q\rightarrow
\mathbb{R}$ be smooth, convex and, Lipschitz differentiable with $L$ as the Lipschitz
constant of its gradient, i.e.%
\begin{equation*}
\lVert\nabla f(x)-\nabla f(y)\rVert_2\leq L\rVert x-y\rVert_2,\quad\text{for all
}x,y\in Q.
\end{equation*}
Nesterov's accelerated proximal gradient algorithm iteratively defines a
sequence $x_{k}$ as a judiciously chosen convex combination of two other
sequences $y_{k}$ and $z_{k}$, which are in turn solutions to two quadratic
optimization problems on $Q$. The sequence $z_{k}$ involves a strongly convex
\textit{prox-function}, $d(x)$, which satisfies
\begin{equation}
d(x)\geq\frac{\alpha}{2}\lVert x-c\rVert_{2}^{2}. \label{eq:proxfn}%
\end{equation}
For simplicity, we have chosen the right-hand side of \eqref{eq:proxfn} with
$\alpha=1$ as our prox-function throughout this paper. The $c$ in the
prox-function is called the \textit{prox-center}. With this prox-function, we have:

\begin{align*}
y_{k}  &  =\operatorname*{argmin}_{y\in Q}\nabla f(x_{k})^{\top}%
(y-x_{k})+\frac{L}{2}\lVert y-x_{k}\rVert_{2}^{2},\\
z_{k}  &  =\operatorname*{argmin}_{z\in Q}\sum_{i=0}^{k}\frac{i+1}{2}%
[f(x_{i})+\nabla f(x_{i})^{\top}(z-x_{i})]+\frac{L}{2}\lVert z-c\rVert_{2}%
^{2},\\
x_{k}  &  =\frac{2}{k+3}z_{k}+\frac{k+1}{k+3}y_{k}.
\end{align*}

Nesterov showed that if $x^{\ast}$ is the optimal solution to
\[
\min_{x\in Q}f(x),
\]
then the iterates defined above satisfy%
\begin{equation*}
f(y_{k})-f(x^{\ast})\leq\frac{L}{k(k+1)}\lVert x^{\ast}-c\rVert_{2}%
^{2}=O\left(  \frac{L}{k^{2}}\right)  .
\end{equation*}
An implication is that the algorithm requires $O(\sqrt{L/\varepsilon})$
iterations to bring $f(y_{k})$ to within $\varepsilon>0$ of the optimal value.

\begin{algorithm}[htbp]
\caption{Accelerated proximal gradient method for convex minimization}
\label{alg:nesta}
\begin{algorithmic}[1]
\REQUIRE{function $f$, gradient $\nabla f$, Lipschitz constant $L$, prox-center $c$.}
\ENSURE{$x^* = \operatorname*{argmin}_{x\in Q} f(x)$ }
\STATE initialize $x_{0}$;
\FOR{$k=0,1,2,\dots$,}
\STATE compute $f(x_{k})$ and $\nabla f(x_{k})$;
\STATE $y_{k}=\operatorname*{argmin}_{y\in Q}\nabla f(x_{k})^{\top}
(y-x_{k})+\frac{L}{2}\lVert y-x_{k}\rVert_{2}^{2}$;
\STATE $z_{k}=\operatorname*{argmin}_{z\in Q}\sum_{i=0}^{k}\frac{i+1}{2}
[f(x_{i})+\nabla f(x_{i})^{\top}(z-x_{i})]+\frac{L}{2}\lVert z-c\rVert_{2}^{2}$;
\STATE $x_{k}=\frac{2}{k+3}z_{k}+\frac{k+1}{k+3}y_{k}$;
\ENDFOR
\end{algorithmic}
\end{algorithm}

In \cite{Nes1}, Nesterov extends his work to minimize nonsmooth convex functions $f$. Nesterov shows that one can obtain the minimum by applying his algorithm for smooth minimization to a smooth approximation $f_{\mu}$ of $f$. Since $\nabla f_{\mu}$ is shown to have Lipschitz constant $L_{\mu} = 1/\mu$, if $\mu$ is chosen to be proportional to $\varepsilon$, it takes $O\left(\frac{1}{\varepsilon}\right)$ iterations to bring $f(x_k)$ within $\varepsilon$ of the optimal value. 

The recent algorithm \textsc{nesta} solves \textsc{bp}$({\sigma})$ using Nesterov's algorithm for nonsmooth minimization. Our algorithm, \textsc{nesta}-\textsc{lasso}, solves \textsc{ls}$({\tau})$ using Nesterov's smooth minimization algorithm. We are motivated by the accuracy and speed of \textsc{nesta} and the fact that the smooth version of Nesterov's algorithm has a faster convergence rate than the nonsmooth version.

\subsection{NESTA-LASSO-K: An accelerated proximal gradient algorithm for LASSO}
\label{sec:NESTA1}

We apply Nesterov's accelerated proximal gradient method, Algorithm~\ref{alg:nesta}, to the \textsc{lasso} problem  \textsc{ls}$(\tau)$. We make one slight improvement to Algorithm~\ref{alg:nesta}. Namely, we update our prox-centers every $K$ steps (cf.\ Algorithm~\ref{alg:nesta-update}); that is, Algorithm~\ref{alg:nesta} is restarted every $K$ iterations with a new prox-center. We will see that this leads to local linear convergence under a suitable application of \textsc{rip} (see Corollary~\ref{cor:OPT} for details). In fact, we show in Section~\ref{sec:OPT} that the prox-centers may be updated in an optimal fashion (cf.\ Algorithm~\ref{alg:nesta-lasso}). 


In our case, $f=\frac{1}{2}\lVert b - Ax \rVert_{2}^{2}$, $\nabla f = A^{\top
}(Ax-b)$, and $Q$ is the $1$-norm ball $\lVert x \rVert_{1}\leq\tau$. The
initial point $x_{0}$ is used as the prox-center $c$. To compute the iterate
$y_{k}$, we have
\begin{align*}
y_{k}  &  =\operatorname*{argmin}_{\lVert y\rVert_{1}\leq\tau}\nabla
f(x_{k})^{\top} (y-x_{k})+\frac{L}{2}\lVert y-x_{k}\rVert_{2}^{2}\\
&  =\operatorname*{argmin}_{\lVert y\rVert_{1}\leq\tau}y^{\top}y - 2(x_{k}
-\nabla f(x_{k})/L)^{\top}y\\
&  =\operatorname*{argmin}_{\lVert y\rVert_{1}\leq\tau}\lVert y-(x_{k}-\nabla
f(x_{k})/L)\rVert_{2}\\
&  =\operatorname{proj}_{1}(x_{k}-\nabla f(x_{k})/L,\tau)
\end{align*}
where $\operatorname{proj}_{1}(v,\tau)$ returns the projection of the vector $v$ onto
the $1$-norm ball of radius $\tau$. By similar reasoning, computing $z_{k}$
can be shown to be equivalent to computing
\[
z_{k}=\operatorname{proj}_{1}\left(c-\frac{1}{L}\sum\nolimits_{i=0}^{k}{\frac{i+1}{2}
\nabla f(x_{i})},\tau\right).
\]
In each iteration, we use the fast $l_1$-projector $\operatorname{proj}_{1}$ described in the next section.

In \textsc{nesta}-\textsc{lasso}-\textsc{k}, Nesterov's method is restarted every $K$ steps with the new prox-center $\operatorname{proj}_{1}(y_{iK}-\nabla f(y_{iK})/L,\tau)$. Here, $y_{iK}$ is the $K$-th iterate of Nesterov's method after the $i$-th prox-center change; see Algorithm~\ref{alg:nesta-update}. In \textsc{nesta}-\textsc{lasso}, Nesterov's method is restarted in the same manner, except $K$ is chosen in an optimal way. 

\begin{algorithm}[htbp]
\caption{\textsc{nesta}-\textsc{lasso}-\textsc{k} algorithm with prox-center updates every $K$ steps}
\label{alg:nesta-update}
\begin{algorithmic}[1]
\REQUIRE{initial point $x_0$, \textsc{lasso} parameter $\tau$, tolerance $\eta$, steps to update $K$}
\ENSURE{$x_{\tau} = \operatorname*{argmin} \{ \lVert b - Ax \rVert_2 : \lVert x \rVert_1 \le \tau \}$. }
\FOR{$j=0,\dots,j_{\max}$,}
\STATE $c_j = x_0$, $h_{0}=0$, $r_{0}=b-Ax_{0}$, $g_{0}=-A^{\top}r_{0}$, $\eta_{0}=\lVert r_{0}\rVert_{2}-(b^{\top}r_0-\tau\lVert g_{0}\rVert_{\infty})/\lVert r_{0}\rVert_{2}$;
\FOR{$k=0,\dots,K$}
\STATE $y_{k}=\operatorname{proj}_{1}(x_{k}-g_{k}/L,\tau)$;
\STATE $h_{k}=h_{k}+\frac{k+1}{2}g_{k}$;
\STATE $z_{k}=\operatorname{proj}_{1}(c_{j}-h_{k}/L,\tau)$;
\STATE $x_{k}=\frac{2}{k+3}z_{k}+\frac{k+1}{k+3}y_{k}$;
\STATE $r_{k}=b-Ax_{k}$;
\STATE $g_{k}=-A^{\top}r_{k}$;
\STATE $\eta_{k}=\lVert r_{k}\rVert_{2}-(b^{\top}r_k-\tau\lVert g_{k}\rVert_{\infty})/\lVert r_{k}\rVert_{2}$;
\ENDFOR
\STATE $x_0 = \operatorname{proj}_{1}(y_{k}+A^{\top}(b-Ay_{k})/L,\tau)$;
\IF{$\eta_{k} \leq \eta$}
\RETURN $x_{\tau} = y_{k}$;
\ENDIF
\ENDFOR
\end{algorithmic}
\end{algorithm}

\begin{algorithm}[htbp]
\caption{\textsc{nesta}-\textsc{lasso} algorithm with optimal prox-center updates}
\label{alg:nesta-lasso}
\begin{algorithmic}[1]
\REQUIRE{initial point $x_0$, \textsc{lasso} parameter $\tau$, tolerance $\eta$.}
\ENSURE{$x_{\tau} = \operatorname*{argmin} \{ \lVert b - Ax \rVert_2 : \lVert x \rVert_1 \le \tau \}$. }
\FOR{$j=0,\dots,j_{\max}$,}
\STATE $c_j = x_0$, $h_{0}=0$, $r_{0}=b-Ax_{0}$, $g_{0}=-A^{\top}r_{0}$, $\eta_{0}=\lVert r_{0}\rVert_{2}-(b^{\top}r_0-\tau\lVert g_{0}\rVert_{\infty})/\lVert r_{0}\rVert_{2}$;
\FOR{$k=0,\dots,k_{\max}$,}
\IF{$\eta_{k} \leq e^{-2}\eta_0$}
\RETURN $y_{k}, \eta_{k}$
\ENDIF
\STATE $y_{k}=\operatorname{proj}_{1}(x_{k}-g_{k}/L,\tau)$;
\STATE $h_{k}=h_{k}+\frac{k+1}{2}g_{k}$;
\STATE $z_{k}=\operatorname{proj}_{1}(c_{j}-h_{k}/L,\tau)$;
\STATE $x_{k}=\frac{2}{k+3}z_{k}+\frac{k+1}{k+3}y_{k}$;
\STATE $r_{k}=b-Ax_{k}$;
\STATE $g_{k}=-A^{\top}r_{k}$;
\STATE $\eta_{k}=\lVert r_{k}\rVert_{2}-(b^{\top}r_k-\tau\lVert g_{k}\rVert_{\infty})/\lVert r_{k}\rVert_{2}$;
\ENDFOR
\STATE $x_0 = \operatorname{proj}_{1}(y_{k}+A^{\top}(b-Ay_{k})/L,\tau)$; 
\IF{$\eta_{k} \leq \eta$}
\RETURN $x_{\tau} = y_{k}$;
\ENDIF
\ENDFOR
\end{algorithmic}
\end{algorithm}


\subsection{$l_1$-projector\label{sec:1proj}}
The projection of an $n$-vector, $d$, onto the $1$-norm ball, $\lVert
x\rVert_{1}\leq\tau$, is the solution to the minimization problem
\begin{equation*}
\operatorname{proj}_{1}(d,\tau):= \operatorname*{argmin}_{x} \lVert d - x\rVert
_{2}\quad\text{s.t.}\quad\lVert x\rVert_{1}\leq\tau.
\end{equation*}
Let $\overline{d}$ be a reordering of $d$ with $|\overline{d}_1| \geq ...\geq |\overline{d}_n|$. Then $a =\operatorname{proj}_{1}(d,\tau)$, is given by
\begin{equation}\label{proj1}
a_{i} = \operatorname{sgn}(d_{i})\cdot\max\{0,|d_{i}| - \eta\}\quad\text{with}\quad\eta= \frac{(|\overline{d}_{1}|+\dots+|\overline{d}_{k}|)-\tau}{k},%
\end{equation}
where $k$ is the largest index such that $\eta\leq |\overline{d}_{k}|$. 

See \cite{L1proj}, by Duchi et al., and \cite{SPGL1} for fast algorithms to compute $a$. Such algorithms cost $O(n\log{n})$ in the worst case but have been shown experimentally to cost much less \cite{SPGL1}. The results in \cite{Tseng} imply the two calls to $\operatorname{proj}_{1}$ in the inner loop of \textsc{nesta}-\textsc{lasso} can be reduced to one call, but due to the low cost of $\operatorname{proj}_{1}$, we do not make this modification.

\section{Local linear convergence and optimality\label{sec:OPT}}

Under reasonable assumptions on the matrix $A$ and the solution $x^{\ast}$ of the \textsc{lasso} problem, we prove that \textsc{nesta}-\textsc{lasso}-\textsc{k} almost always has a local linear convergence rate for large enough $K$. We also show that we can update the prox-centers $c$ in a provably optimal way (\textsc{nesta}-\textsc{lasso}). Let $y_k$ be the $k$-th iterate of Nesterov's accelerated proximal gradient method when minimizing a function $f$. Recall, 
\begin{equation}
f(y_{k})-f(x^{\ast})\leq\frac{L}{k(k+1)}\lVert x^{\ast}-c\rVert_{2}%
^{2} \label{eq:Lip}%
\end{equation}
where $L$ is the Lipschitz constant for $\nabla f$ and $c$ is the prox-center \cite{Nes2,Nes1}. 

In our case, $f(x)=\frac{1}{2}\lVert Ax-b\rVert_{2}^{2}$, where $A$ is a wide matrix. We will assume that $A$ satisfies the \textit{restricted isometry
property} $(\textsc{rip})$ of order $2s$ as described in \cite{DS,RIP}. Namely, there exists a
constant $\delta_{2s}\in(0,1)$ such that%
\begin{equation}
(1-\delta_{2s})\lVert x\rVert_{2}^{2}\leq\lVert Ax\rVert_{2}^{2}\leq
(1+\delta_{2s})\lVert x\rVert_{2}^{2} \label{eq:RIP}%
\end{equation}
whenever $\lVert x\rVert_{0}\leq2s$. Since the \textsc{rip} helps ensure that the solution to
\eqref{eqnZero} is closely approximated by the solution to \eqref{bpsigma} \cite{RIP}, and we are ultimately interested in solving \eqref{bpsigma}, this is a reasonable assumption. Moreover, since we hope to recover the sparse solution to the solution to \eqref{eqnZero}, we assume that the solution $x^*$ to the \textsc{lasso} problem is $s$-sparse. We plan to analyze the approximately sparse case for future work. 

It turns out that under these assumptions, the sequence of $y_k$'s converges to the solution $x^*$. 

\begin{lemma}\label{yconverge}If $A$ satisfies the \textit{restricted isometry property} $(\textsc{rip})$ of order $2s$, and the optimal solution $x^*$ is $s$-sparse, then the sequence of $y_k$'s converges to $x^*$.
\end{lemma}
\begin{proof} Under the \textsc{rip} and the assumption that $x^{*}$ is $s$-sparse, it follows by Theorem~5 in \cite{OPT} that the \textsc{lasso} problem has a unique solution. Since the $y_k$'s lie on the $1$-norm ball, and the $1$-norm ball is compact, this implies that the sequence of $y_{k}$'s must converge to $x^{*}$.\qed\end{proof}
 
\subsection{Almost sure sparsity of Nesterov's method}

We first state and prove the following results before proving our main results, i.e. the local linear convergence of  \textsc{nesta}-\textsc{lasso}-\textsc{k} and the optimality of \textsc{nesta}-\textsc{lasso}. In particular, we show that under certain assumptions on the \textsc{lasso} problem, the solution is almost always non-degenerate (see Proposition \ref{lemma:opt1}), and the iterates of Algorithm \ref{alg:nesta} are almost always eventually $s$-sparse. Our first lemma describes when the image of proj$_1$ is $s$-sparse.

For $d\in\mathbb{R}^n$ with $|d_1|\geq\ldots\geq |d_n|$, recall from Section \ref{sec:1proj} that $a =\operatorname{proj}_{1}(d,\tau)$ is given by
\begin{equation}\label{proj11}
a_i = \max\{0,|d_{i}| - \eta\}\quad\text{with}\quad\eta= \frac{|d_{1}|+\dots+|d_{k}|-\tau}{k},%
\end{equation}
where $k$ is the largest index such that $\eta\leq |d_{k}|$. For each $i \in \{1,\ldots,n\}$, define
\begin{equation*}
\eta_i := \frac{|d_{1}|+\dots+|d_{i}|-\tau}{i}.
\end{equation*} 
The $\eta_i$'s satisfy the following property which is used in the proof of our first lemma.

\begin{claim}\label{claim:eta}
$\eta = \max\left\{\eta_i: i = 1,\ldots,n\right\}$.
\end{claim}
\begin{proof}
A simple algebraic manipulation shows that $\eta_i - \eta_{i-1} = \frac{1}{i-1}(d_i - \eta_i)$ for $i \in \{2,\ldots,n\}$. Thus, $\operatorname{sgn}(\eta_i - \eta_{i-1}) = \operatorname{sgn}(d_i - \eta_i)$. Suppose $\eta = \eta_k$ for some $k$. Then $\eta_k \leq d_k$. Since $\operatorname{sgn}(\eta_i - \eta_{i-1}) = \operatorname{sgn}(d_i - \eta_i)$, it follows that $\eta_{k-1}\leq\eta_{k}$ and so $\eta_{k-1} \leq d_{k-1}$; thus, we can repeatedly apply this argument to show that $\eta_i\leq\eta_k$ for any $i<k$. A similar argument shows that $\eta_i\leq\eta_k$ for any $i>k$.
\qed\end{proof}

Given a nonempty $I\subseteq\left\{1,\ldots,n\right\}$ with $|I| = s$ and $\tau > 0$, if $s<n$, define the set
\begin{equation*}
C_{I,\tau} := \left\{x\in\mathbb{R}^n : \sum\nolimits_{i\in I} |x_i| - \tau \geq s\cdot|x_j| \text{ for } j\notin I\right\}.
\end{equation*}
If $I = \left\{1,\ldots,n\right\}$, let $C_{I,\tau} := \left\{x\in\mathbb{R}^n : \|x\|_1 \geq\tau \right\}$. Note that $C_{I,\tau}$ is a union of cones. The following lemma shows that proj$_1$ sends vectors in $C_{I,\tau}$ to vectors that are \textit{at least} $s$-sparse. 

\begin{lemma}\label{sparse1} If $d\in C_{I,\tau}$ then $I_{\operatorname{proj}_{1}(d,\tau)}\subseteq I$. Namely, $\operatorname{proj}_{1}(d,\tau)$ is at least $s$-sparse. 
\end{lemma}
\begin{proof} 
Suppose $d\in C_{I,\tau}$ with $d\geq 0$. For simplicity, assume that $I = \left\{1,\ldots,s\right\}$, $d_1\geq\ldots\geq d_s$, and $d_{s+1}\geq\ldots\geq d_n$. The proof is easily generalized to all other cases.  
	
By $(\ref{proj11})$, $a_{s+1}\geq\ldots\geq a_n$, so it is enough to show that $a_{s+1} = 0$. Since $d\in C_{I,\tau}$, 
\begin{equation}\label{d2}
	 s\cdot d_{s+1} \leq d_1+\ldots+d_s - \tau.
\end{equation}
	Let $r\leq s$ be the largest index such that $d_r\geq d_{s+1}$. Such an $r$ exists since  $s\cdot d_1 \geq d_1+\dots+d_s - \tau\geq s\cdot d_{s+1}$. By $(\ref{d2})$,
	\begin{eqnarray*}	
	  r\cdot d_{s+1} &\leq& d_1 +\dots+d_r + (d_{r+1}-d_{s+1}) + \dots +(d_{s}-d_{s+1}) -\tau\\ 
	  &\leq& d_1 +\dots+d_r -\tau,  
\end{eqnarray*}
which implies,
\begin{equation*}
	d_{s+1} \leq \frac{d_1+\dots+d_r+d_{s+1}-\tau}{r+1}.
		\end{equation*}
	By the above claim, $d_{s+1}\leq\eta$, and so $a_{s+1} = 0$.\qed

\end{proof}

The next few lemmas involve the \textsc{lasso} problem. First note the following \textsc{lasso} optimality conditions (see e.g.\ \cite{Pathwise} and \cite{LassoOpt}). 
\begin{proposition}[LASSO optimality conditions] For an $x^*\in\mathbb{R}^n$, let $I = I_{x^*}$. Then $x^*$ is the optimal solution to the \textsc{lasso} problem if and only if the gradient, $-\nabla f(x^*) = A^{\top}(b-Ax^*)$, at $x^*$ satisfies 
\begin{align}\label{lemma:opt1}
A_{I}^{\top}(b-A_{I}\overline{x}^*) = \gamma\cdot \operatorname{sgn}(\overline{x}^*),\\
\lVert A_{I^c}^{\top}(b-A_{I}\overline{x}^*)\rVert_\infty \leq \gamma.\label{opt2}
\end{align}
for some $\gamma\geq0$. Moreover, there is a one-to-one correspondence between the $\gamma$ and $\tau$. Following the typical convention, if \eqref{opt2} is a strict inequality, we say that $x^*$ is a non-degenerate solution. Otherwise, we say that $x^*$ is a degenerate solution. 
\end{proposition}
%

The following lemma relates non-deg
\begin{lemma}\label{lemma:int}If $x^*$ is a non-degenerate solution with $I_{x^*} = I$, then $x^* - \nabla f(x^*)/L\in \operatorname{int}(C_{I,\tau})$.\end{lemma}
\begin{proof} By \eqref{lemma:opt1} and \eqref{opt2}, for any $j\in I$, we have 
\begin{align*}
\sum_{i\in I} \left|x_i + \frac{a_i^{\top}(b - A_I\overline{x}^*)}{L}\right|\quad-\quad\tau &= \sum_{i\in I} \left|x_i + \frac{\gamma\cdot\operatorname{sgn}(x_i)}{L}\right|\quad-\quad\tau \\
&= \sum_{i\in I} \left|x_i\right|\quad + \quad |I|\cdot\frac{\gamma}{L}\quad-\quad\tau \\
&\geq |I|\cdot|a_j^{\top}(b - A_I\overline{x}^*)|\\
&=|I|\cdot|x_j + a_j^{\top}(b - A_I\overline{x}^*)|.
\end{align*}
The third equation on the right holds since we must have $\|x^*\|_1 = \tau$. If not, then we must have $Ax^* - b = 0$ which is only possible when $x^*$ is a degenerate solution.
\qed\end{proof}

We now prove that under our assumptions on the \textsc{lasso} problem, the gradient at the optimal solution will almost always lie in a desirable direction. In other words, we have the following result.
  
\begin{theorem}\label{remark:direction}Suppose $A\in\mathbb{R}^{m\times n}$ satisfies the \textit{restricted isometry property} $(\textsc{rip})$ of order $2s$, and the optimal solution $x^*$ is $s$-sparse. The solution $x^*$ will almost always be non-degenerate.
\end{theorem}
\begin{proof}
Fix positive integers $m$, $n$, and $I\subseteq\left\{1,\ldots,n\right\}$ with $|I| = s \leq m$. Define $\textsc{ls}(m,n,I)$ to be the set of \textsc{lasso} problems 
\begin{equation*}
\min\; \lVert Ax-b\rVert_{2} \quad\text{s.t.}\quad\lVert
x\rVert_{1}\leq\tau
\end{equation*}
with $s$-sparse solutions $x^*$ such that $I_{x^*} = I$ and $A\in\mathbb{R}^{m\times n}$ satisfying the \textsc{rip} of order $2s$. As seen in the proof of Lemma~\ref{yconverge}, $x^*$ is unique.

The \textsc{lasso} optimality conditions above say that $x^*$ is the solution to a \textsc{lasso} problem if and only if $A_I^{\top}(b - A_I\overline{x}^*) = \gamma\cdot\operatorname{sgn}(\overline{x}^*)$ and $\lVert A_{I^c}^{\top}(b - A_I\overline{x}^*)\rVert_\infty \leq \gamma$ for some $\gamma\geq 0$. Since there is a one-to-one correspondence between $\tau$ and $\gamma$, we represent each \textsc{lasso} problem in $\textsc{ls}(m,n,I)$ with the quadruple $(A_I,A_{I^c},b,\gamma)$. Following this notation,
\begin{equation*}
\textsc{ls}(m,n,I) = T_1\cup T_2 
\end{equation*}
where
\begin{align*}
T_1 &:= \left\{(A_I,A_{I^c},b,\gamma) \in \textsc{ls}(m,n,I) : \|A_{I^c}^{\top}(b - A_1\overline{x}^*)\|_{\infty} = \gamma\right\},\\
T_2 &:= \left\{(A_I,A_{I^c},b,\gamma) \in \textsc{ls}(m,n,I) : \|A_{I^c}^{\top}(b - A_1\overline{x}^*)\|_{\infty} < \gamma\right\}.    
\end{align*}
We show that $T_1$ has Lebesgue measure zero and $T_2$ has nonzero Lebesgue measure.  

By the \textsc{rip}, $A_I$ has full rank since
\begin{equation*}
0<(1-\delta_{2s})\lVert x\rVert_{2}^{2}\leq\lVert A_Ix\rVert_{2}^{2}\leq
(1+\delta_{2s})\lVert x\rVert_{2}^{2} 
\end{equation*}
for all nonzero $x\in\mathbb{R}^s$. Thus, $A_I^{\top}A_I$ is invertible, and if $x^*$ is the solution to  $(A_I,A_{I^c},b,\gamma)\in\textsc{ls}(m,n,I)$ then 
\begin{equation*}
\overline{x}^* = (A_I^{\top}A_I)^{-1}(A_I^{\top}b - \gamma\cdot\operatorname{sgn}(\overline{x}^*)).
\end{equation*}

Let $U := \left\{(A_I,A_{I^c},b,\gamma)\in\mathbb{R}^{m\times s}\times \mathbb{R}^{m\times (n-s)}\times \mathbb{R}^{m}\times \mathbb{R}^+ :  A_I \text{ nonsingular}\right\}$. For each $w\in\left\{-1,1\right\}^s$, define the function $g_w : U \rightarrow \mathbb{R}^{n-s}$ by
\begin{equation*}
g_w(A_I,A_{I^c},b,\gamma) = \frac{A_{I^c}^{\top}\left(b - A_I(A_I^{\top}A_I)^{-1}(A_I^{\top}b - \gamma\cdot w)\right)}{\gamma},
\end{equation*}
If $S := \left\{x\in\mathbb{R}^{(n-s)} : |x| \leq 1\right\}$ with boundary $\partial S$ and interior $\operatorname{int}(S)$, then
\begin{equation*}
T_1 \subseteq \quad\bigcup_w g_w^{-1}(\partial S)\quad \bigcup \quad\mathbb{R}^{m\times s}\times \mathbb{R}^{m\times (n-s)}\times \mathbb{R}^{m}\times \left\{0\right\}.
\end{equation*}

Each component function of $g_w$ involves exactly one row of the variables in $A_{I^c}^{\top}$, and $g_w$ is the composition of matrix inversion and basic matrix operations. Thus, $g_w$ is a smooth map of constant rank $(n-s)$ on the open set $U\setminus g_w^{-1}(0)$. An application of Theorem 1 of \cite{measure0} shows that $g_w^{-1}(\partial S)$ has measure zero. Hence, $T_1$ has Lebesgue measure zero. 

To see that $T_2$ has nonzero measure, note that $T_2$ is the set of $(A_{I},A_{I^c},b,\gamma)\in U$ such that $A$ satisfies the \textsc{rip} of order $2s$ intersected with 
\begin{equation*}
\bigcup_w g_w^{-1}(\operatorname{int}(S))\cap \left\{(A_I,A_{I^c},b,\gamma)\in U:\operatorname{sgn}\left((A_{I}^{\top}A_I)^{-1}(A_I^{\top}b-\gamma\cdot w)\right) =w\right\}.  
\end{equation*}
Using the triangle inequality, it is easy to see that the former set is open since
\begin{equation*}
(1-\delta_{2s})\lVert x\rVert_{2}^{2}\leq\lVert Ax\rVert_{2}^{2}\leq
(1+\delta_{2s})\lVert x\rVert_{2}^{2} 
\end{equation*}
holds under small perturbations of $A$. The latter set is open since $g_w$  and $(A_I,A_{I^c},b,\gamma)\mapsto(A_I^{\top}A_I)^{-1}(A_I^{\top}b-\gamma\cdot w)$ are continuous functions for each $w$. Thus, $T_2$ is open. Moreover, it is easy to see that if $(A_I,A_{I^c},b,\gamma) \in T_1$ then there exists a small perturbation $E$ such that $(A_I,A_{I^c}+E,b,\gamma)\in T_2$. If $\textsc{ls}(m,n,I)$ is nonempty, it must be that $T_2$ is nonempty and therefore, has nonzero measure. 

This argument is easily extended for any $I\subseteq\left\{1,\ldots,n\right\}$. Since there are a finite number of $I$'s and a finite union of measure zero sets has measure zero, our lemma holds. 
\qed\end{proof}

Let $y_k$ be the $k$-th iterate of Nesterov's accelerated proximal gradient method applied to the \textsc{lasso} problem. The previous results allow us to make the following conclusion regarding the sparsity of $y_k$.

\begin{theorem}\label{theorem:sparse}Suppose $A$ satisfies the \textit{restricted isometry property} $(\textsc{rip})$ of order $2s$, and the optimal solution $x^*$ is $s$-sparse. The iterates $y_k$ are almost always eventually $s$-sparse.
\end{theorem}
\begin{proof}
By Lemma~\ref{yconverge}, the sequence $\{y_k\}$ converges to to the optimal solution $x^*$. Since $x_{k}=\frac{2}{k+3}z_{k}+\frac{k+1}{k+3}y_{k}$ and $\nabla f(x) = A^{\top}(Ax-b)$ is continuous, the sequence $\left\{x_k\ - \nabla f(x_k)/L\right\}$ converges to $x^* - \nabla f(x^*)/L$.

Theorem~\ref{remark:direction} says that $x^*$ is almost always non-degenerate, in which case, by Lemma~\ref{lemma:int}, $x^* - \nabla f(x^*)/L\in \operatorname{int}(C_{I{x^*},\tau})$, where $\operatorname{int}(C_{I{x^*},\tau})$ is the interior of $C_{I{x^*},\tau}$. Thus, if $x^*$ is non-degenerate, there exists an $N$ such that for $k \geq N$, $x_k - \nabla f(x_k)/L\in \operatorname{int}(C_{I_{x^*}})$. By Lemma~\ref{sparse1}, for such $k$, $y_k = \operatorname{proj}_{1}(x_{k}-\nabla f(x_{k})/L,\tau)$ is $s$-sparse.
\qed\end{proof}

\subsection{Local linear convergence of NESTA-LASSO}
We now show that \textsc{nesta-lasso-k}, Algorithm~\ref{alg:nesta-update}, is almost always locally linearly convergent under certain assumptions. First we give some motivation for why we update the prox-centers in \textsc{nesta-lasso-k}. 

Consider applying Nesterov's accelerated proximal gradient method, Algorithm~\ref{alg:nesta}, to the \textsc{lasso} problem. Suppose $A$ satisfies the \textit{restricted isometry property} $(\textsc{rip})$ of order $2s$ and the optimal solution $x^*$ is $s$-sparse. As seen in Theorem~\ref{theorem:sparse}, the iterates $y_k$ are almost always eventually $s$-sparse. Thus, it is reasonable to assume that $y_k$ is $s$-sparse.

Let $\delta=1-\delta_{2s}$ where $\delta_{2s}$ is the \textsc{rip} constant of $A$. We have%
\begin{equation}\label{eqn1}
\lVert A(x^{\ast}-y_{k})\rVert_{2}^{2}+2(y_{k}-x^{\ast})^{\top}A^{\top}(Ax^{\ast
}-b)=f(y_{k})-f(x^{\ast})\geq\lVert A(y_{k}-x^{\ast})\rVert_{2}^{2}\geq
\delta\lVert y_{k}-x^{\ast}\rVert_{2}^{2}.%
\end{equation}
To see the first inequality, let $y = x^{\ast}+\tau(y_k-x^{\ast})$ for $\tau\in[0,1]$. Due to the convexity of the $1$-norm ball, $y$ is feasible. Since $x^{\ast}$ is the minimum, for any $\tau\in[0,1]$,
\[
f(y)-f(x^{\ast}) = \tau^2\lVert A(x^{\ast}-y_{k})\rVert_{2}^{2}+2\tau(y_{k}-x^{\ast})^{\top}A^{\top}(Ax^{\ast
}-b)\geq 0.
\] 
Thus, $(y_{k}-x^{\ast})^{\top}A^{\top}(Ax^{\ast}-b)\geq 0$. The second inequality follows from \eqref{eq:RIP}.
Then from \eqref{eq:Lip}, we have%
\[
\delta\lVert y_{k}-x^{\ast}\rVert_{2}^{2}\leq\frac{L}{k(k+1)}\lVert x^{\ast
}-c\rVert_{2}^{2}.
\]
Putting everything together gives%
\begin{equation}
\lVert y_{k}-x^{\ast}\rVert_{2}\leq\sqrt{\frac{L}{k(k+1)\delta}}\lVert
x^{\ast}-c\rVert_{2}\leq\frac{1}{k}\sqrt{\frac{L}{\delta}}\lVert c-x^{\ast
}\rVert_{2}. \label{eq:decrease}%
\end{equation}
The above relation and \eqref{eq:Lip} suggest that when solving the \textsc{lasso} problem, we can speed up Algorithm~\ref{alg:nesta} by updating the prox-center, $c$, every $K$ steps. With our assumptions, we prove in the first part of following theorem that for every $K>\sqrt{\frac{L}{\delta}}$, restarting Algorithm~\ref{alg:nesta} every $K$ steps with the new prox-center, $\operatorname{proj}_{1}(y_{k}-\nabla f(y_k)/L,\tau)$, is locally linearly convergent. In the second part of Theorem~\ref{thm:OPT}, we prove that there is an optimal number of such steps. 

In the following, allow the iterates to be represented by $y_{jk}$ where $j$ is the number of times the prox-center has been changed (the outer iteration) and $k$ is number of iterations after the last
prox-center change (the inner iteration). Let the $j$-th prox-center update be represented by $p_j$.

\begin{theorem}\label{thm:OPT}Suppose $A$ satisfies the \textit{restricted isometry property} of order $2s$ and the solution $x^{\ast}$ is $s$-sparse. The following holds if $x^*$ is non-degenerate. 
\begin{enumerate}[\upshape (i)]
	\item Algorithm~\ref{alg:nesta-update} is locally linearly convergent for any $K>\sqrt{\frac{L}{\delta}}$.
	\item In Algorithm 2, let $j_{\operatorname*{tot}}$ be the total number of prox-center changes. The total number of iterations, $j_{\operatorname*{tot}}\cdot K$, to get $\lVert
p_{j}-x^{\ast}\rVert_{2}\leq\varepsilon$ is minimized if $K$ is equal to 
\begin{equation}
k_{\operatorname*{opt}}:=e\sqrt{\frac{L}{\delta}}\label{eq:kopt}%
\end{equation}
where $e$ is the base of the natural logarithm. Moreover, for each $j$, 
\[
\lVert p_{j}-x^{\ast}\rVert_2\leq\frac{1}{e^j}\lVert p_{0}%
-x^{\ast}\rVert_2.
\]
\end{enumerate}
\end{theorem}

\begin{proof}
\begin{enumerate}[\upshape (i)]
\item By Lemma~\ref{lemma:int}, $x^* - \nabla f(x^*)/L\in \operatorname{int}(C_{I_{x^*},\tau})$, where $\operatorname{int}(C_{I_{x^*},\tau})$ is the interior of $C_{I_{x^*},\tau}$. Let $U_{\alpha}$ be a ball of radius $\alpha >0$, centered at $x^* - \nabla f(x^*)/L$, such that $U_{\alpha}\subseteq \operatorname{int}(C_{I_{x^*},\tau})$. By continuity, we may choose an 
$\epsilon >0$ such that $\|x - x^*\|_2<\epsilon$ implies $x - \nabla f(x)/L\in U_{\alpha}$.

Now choose $\beta >0$ such that for all $\|x\|_1\leq\tau$, $f(x) - f(x^*) < \beta$ implies $\|x-x^*\|_2<\epsilon$. To see that $\beta > 0$ exists, suppose for a contradiction that $\forall \ n$, $\exists \ x_n$ with $\|x_n\|_1\leq\tau$ where $f(x_n) - f(x^*) <1/n$ but $\|x_n-x^*\|_2 \geq\epsilon$. Since the $1$-norm ball is compact, there is a subsequence $\{x_{n_k}\}$ of $\{x_n\}$ converging to some $x'$. By continuity, $f(x_{n_k})$ converges to $f(x')$. As mentioned in the proof of Lemma~\ref{yconverge}, $x^*$ is a unique minimum. Thus, $f(x')\neq f(x^*)$ contradicting the assumption that $f(x_n)$ converges to $f(x^*)$.

We now show that Algorithm~\ref{alg:nesta-update} is linearly convergent if the initial prox-center $p_0$ is close enough to $x^*$. Suppose $\|p_0 - x^*\|_2 < \beta/L$. Then $(\ref{eq:Lip})$ implies 
\begin{equation*}
f(y_{1K}) - f(x^*) \leq \frac{L}{K(K+1)}\|p_0 - x^*\|_2^2 < \beta, 
\end{equation*} 
and so $\|y_{1K}-x^*\|<\epsilon$. By Lemma~\ref{sparse1}, $p_1 = \operatorname{proj}_{1}(y_{1K}-\nabla f(y_{1K})/L,\tau)$ is $s$-sparse, and by $(\ref{eqn1})$, 
\begin{equation}\label{e1}
\delta\|p_1 - x^*\|_2^2\leq f(p_1) -f(x^*).
\end{equation}

Note that $p_1$ is the result of a step of the \textit{projected gradient method}, i.e. $x_{k+1} = \operatorname{proj}_{1}(x_{k}-\nabla f(x_{k})/L,\tau)$. Since this method is monotonically decreasing (see \cite{PGMmonotonic} for a proof),  
\begin{equation}\label{e2}
f(p_1) -f(x^*) \leq f(y_{1K}) -f(x^*).
\end{equation}

Combining $(\ref{e1})$ and $(\ref{e2})$ with $(\ref{eq:Lip})$, gives 
\begin{equation*}
\|p_1 - x^*\|_2 \leq \frac{1}{K}\sqrt{\frac{L}{\delta}}\|p_0 - x^*\|_2.
\end{equation*}
Since we assume that $K> \sqrt{\frac{L}{\delta}}$, we have $\|p_1 - x^*\|_2 < \beta/L$. Thus, the above arguments can be repeatedly applied to show that for any $j$,
\begin{equation}\label{e3}
\|p_j - x^*\|_2 \leq \left(\frac{1}{K}\sqrt{\frac{L}{\delta}}\right)^j\|p_0 - x^*\|_2.
\end{equation}

\item First observe that \eqref{e3} implies
\[
\lVert p_{j}-x^{\ast}\rVert_{2}\leq\left(  \frac{1}{K}\sqrt{\frac{L}{\delta}%
}\right)  ^{j}\lVert p_{0}-x^{\ast}\rVert_{2}\leq\varepsilon\lVert
p_{0}-x^{\ast}\rVert_{2}%
\]
when%
\[
j\log\left(  \frac{1}{K}\sqrt{\frac{L}{\delta}}\right)  =\log\varepsilon.
\]
This relation allows us to choose $K$ to minimize the product $j\cdot K$. Since%
\[
j\cdot K=\frac{K\log\varepsilon}{\log\sqrt{L/\delta}-\log K},
\]
taking derivative of the expression on the right shows that $j\cdot K$ is minimized
when%
\[
K =e\sqrt{\frac{L}{\delta}},%
\]
where $e$ is the base of the natural logarithm. The total number of iterations will then be
\[
j_{\operatorname*{tot}}\cdot K=-e\sqrt{\frac{L}{\delta}}%
\log\varepsilon.
\]\qed
\end{enumerate}\end{proof}

\noindent Theorem~\ref{remark:direction} implies that we almost always have local linear convergence:
\begin{corollary}\label{cor:OPT}
If $A$ satisfies the \textit{restricted isometry property} of order $2s$ and the
solution $x^{\ast}$ is $s$-sparse, Algorithm~\ref{alg:nesta-update} is almost always locally linearly convergent for any $K>\sqrt{\frac{L}{\delta}}$.
\end{corollary}

In our experiments, there are some cases where updating the prox-center will
eventually cause the duality gap to jump to a higher value than the previous
iteration. This can cause the algorithm to run for more iterations than
necessary. A check is added to prevent the prox-center from being updated if
it no longer helps.

In Table~\ref{table1}, we give some results showing that updating the prox-center is effective when using \textsc{nesta-lasso} to solve the \textsc{lasso} problem.

\begin{table}
\centering
\caption{Number of products with $A$ and $A^{\top}$ for \textsc{nesta-lasso} without prox-center updates (cf. Algorithm~\ref{alg:nesta}) and \textsc{nesta-lasso} with prox-center updates (cf. Algorithm~\ref{alg:nesta-lasso}). These values are given by $N_A$ and $N_{A}^{\text{update}}$ respectively.}
\label{table1}
\begin{tabular}{lllll}
\hline\noalign{\smallskip}
Number of Rows of $A$ & Number of Columns of $A$ & $\tau$ \ & $N_{A}$ \ & $ N_{A}^{\text{update}}$ \\
\noalign{\smallskip}\hline\noalign{\smallskip}
100 & 256 & 6.28 & 69 & 37\\
200 & 512 & 12.6 & 77 & 47\\
400 & 1024 & 25.1 & 157 & 45\\
\noalign{\smallskip}\hline
\end{tabular}
\end{table}


\section{PARNES\label{sec:PARNES}} 

In applications where the noise level of the problem is approximately known, it is preferable to solve \textsc{bp}$(\sigma)$. The Pareto root-finding method used by van~den~Berg and Friedlander \cite{SPGL1} interprets \textsc{bp}$(\sigma)$ as finding the root of a single-variable nonlinear equation whose graph is called the Pareto curve. Their implementation of this approach is called \textsc{spgl1}. In \textsc{spgl1}, an inexact version of Newton's method is used to find the root, and at each iteration, an approximate solution to the \textsc{lasso} problem, \textsc{ls}$(\tau)$, is found using an \textsc{spg} approach. Refer to \cite{inexactNewt} for more information on the inexact Newton method. In Section~\ref{sec:NUM}, we show experimentally that using \textsc{nesta}-\textsc{lasso} in place of the \textsc{spg} approach for solving the \textsc{ls}$(\tau)$ subproblems can lead to improved results. We call this version of the Pareto root-finding method, \textsc{parnes}. The pseudocode of \textsc{parnes} is given in Algorithm~\ref{alg:parnes}.

\subsection{Pareto curve}
\label{sec:ParCurve}

Suppose $A$ and $b$ are given, with $0\neq b\in\text{range}(A)$. The points on the Pareto curve are given by $(\tau,\varphi(\tau))$ where $\varphi(\tau)=\left\Vert Ax_{\tau}-b\right\Vert
_{2}$, $\tau = \left\Vert x_{\tau}\right\Vert _{1}$, and $x_{\tau}$ solves \textsc{ls}$(\tau)$. The Pareto
curve gives the optimal trade-off between the 2-norm of the residual and
1-norm of the solution to \textsc{ls}$(\tau)$. It can also be shown that the Pareto
curve also characterizes the optimal trade-off between the 2-norm of the
residual and 1-norm of the solution to \textsc{bp}$(\sigma)$. Refer to \cite{SPGL1} for a more detailed explanation of these properties of the Pareto curve. An example of a Pareto curve is shown in Figure~\ref{fig:1}.

Let $\tau_{\textsc{bp}}$ be the optimal objective value of \textsc{bp}$(0)$. The Pareto curve is restricted to the interval $\tau\in[0,\tau_{\textsc{bp}}]$ with $\varphi(0)=\left\Vert b\right\Vert _{2}>0$ and
$\varphi(\tau_{\textsc{bp}})=0$. The following theorem, proven by van den Berg and Friedlander, shows that the Pareto curve is convex, strictly decreasing over
the interval $\tau\in[0,\tau_{\textsc{bp}}]$, and continuously differentiable
for $\tau\in(0,\tau_{\textsc{bp}})$.

\begin{proposition}\textbf{\cite{SPGL1}}
\label{paretoProp} The function $\varphi$ is
\begin{enumerate}[\upshape (i)]
\item convex and nonincreasing;

\item continuously differentiable for $\tau\in(0,\tau_{\textsc{bp}})$ with
$\varphi^{\prime}(\tau)=-\lambda_{\tau}$ where $\lambda_{\tau}=\lVert
A^{T}y_{\tau}\rVert _{\infty}$ is the optimal dual variable to
\textsc{ls}$(\tau)$ and $y_{\tau}=r_{\tau}/\lVert r_{\tau}\rVert
_{2}$ with $r_{\tau} = Ax_{\tau}-b$;

\item strictly decreasing and $\lVert x_{\tau}\rVert _{1} =\tau$ for
$\tau\in[0,\tau_{\textsc{bp}}]$.
\end{enumerate}
\end{proposition}

\begin{figure*}
\centering
\includegraphics[scale=0.5]{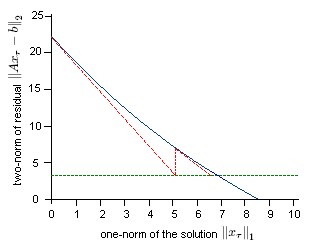}
\caption{An example of a Pareto curve. The solid line is the Pareto curve; the dotted red lines give two
iterations of Newton's method.}
\label{fig:1}
\end{figure*}

\subsection{Root finding}
\label{sec:RootFind}
Since the Pareto curve characterizes the optimal trade-off for both
\textsc{bp}$(\sigma)$ and \textsc{ls}$(\tau)$, solving \textsc{bp}$(\sigma)$
for a fixed $\sigma$ can be interpreted as finding a root of the non-linear
equation $\varphi(\tau)=\sigma$. The iterations consist of finding the
solution to \textsc{ls}$(\tau)$ for a sequence of parameters $\tau
_{k}\rightarrow\tau_{\sigma}$ where $\tau_{\sigma}$ is the optimal objective
value of \textsc{bp}$(\sigma)$.

Applying Newton's method to $\varphi$ gives
\[
\tau_{k+1}=\tau_{k}+(\sigma-\varphi(\tau_{k}))/\varphi^{\prime}(\tau_{k}).
\]
Since $\varphi$ is convex, strictly decreasing and continuously
differentiable, $\tau_{k}\rightarrow\tau_{\sigma}$ superlinearly for all
initial values $\tau_{0}\in(0,\tau_{\textsc{bp}})$ (see Proposition~1.4.1 in
\cite{bertsekas}). By Proposition~\ref{paretoProp}, $\varphi(\tau_{k})$ is the
optimal value to \textsc{ls}$(\tau_{k})$ and $\varphi^{\prime}(\tau_{k})$ is
the dual solution to \textsc{ls}$(\tau_{k})$. Since evaluating $\varphi(\tau_{k})$ involves solving a potentially large
optimization problem, an inexact Newton method
is carried out with approximations of $\varphi(\tau_{k})$ and $\varphi
^{\prime}(\tau_{k})$.

Let $\overline{y}_{\tau}$ and $\overline{\lambda}_{\tau}$ be the approximations of the
$y_{\tau}$ and $\lambda_{\tau}$ defined in Proposition~\ref{paretoProp}. The duality gap at each iteration is given by
\[
\eta_{\tau}=\left\Vert \overline{r}_{\tau}\right\Vert _{2}-(b^{T}%
\overline{y}_{\tau}-\tau\overline{\lambda}_{\tau}).
\]
The following convergence result has been proven by van den Berg and Friedlander.

\begin{theorem}\textbf{\cite{SPGL1}}
Suppose $A$ has full rank, $\sigma\in(0,\left\Vert b\right\Vert _{2})$, and
the inexact Newton method generates a sequence $\tau_{k}\rightarrow
\tau_{\sigma}$. If $\eta_{k}:=\eta_{\tau_{k}}\rightarrow0$ and $\tau_{0}$
is close enough to $\tau_{\sigma}$, we have
\[
|\tau_{k+1}-\tau_{\sigma}|=\gamma_{1}\eta_{k}+\zeta_{k}|\tau_{k}-\tau
_{\sigma}|,
\]
where $\zeta_{k}\rightarrow0$ and $\gamma_{1}$ is a positive constant.
\end{theorem}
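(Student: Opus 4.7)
The plan is to perform a standard inexact-Newton error decomposition tailored to the Pareto setting, using the duality gap $\eta_k$ to control the discrepancy between the approximate quantities $\overline{\varphi}(\tau_k) := \lVert \overline{r}_{\tau_k}\rVert_2$ and $\overline{\varphi}'(\tau_k) := -\overline{\lambda}_{\tau_k}$ produced by the inner solver and the exact values $\varphi(\tau_k), \varphi'(\tau_k)$. First I would write the inexact iterate as
\[
\tau_{k+1} - \tau_\sigma = (\tau_k - \tau_\sigma) + \frac{\sigma - \overline{\varphi}(\tau_k)}{\overline{\varphi}'(\tau_k)},
\]
then add and subtract $(\sigma - \varphi(\tau_k))/\varphi'(\tau_k)$ to split the error into three pieces: an \emph{exact-Newton error} $(\tau_k - \tau_\sigma) + (\sigma - \varphi(\tau_k))/\varphi'(\tau_k)$, a \emph{value-mismatch error} $(\varphi(\tau_k) - \overline{\varphi}(\tau_k))/\overline{\varphi}'(\tau_k)$, and a \emph{slope-mismatch error} $(\sigma - \varphi(\tau_k))\bigl(1/\overline{\varphi}'(\tau_k) - 1/\varphi'(\tau_k)\bigr)$.

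Each piece would then be bounded in turn. By Theorem~\ref{paretoThm}, $\varphi$ is convex, $C^1$, and strictly decreasing on $(0,\tau_{\textsc{bp}})$ with $\varphi'(\tau_\sigma) \ne 0$, so the exact-Newton iterate converges superlinearly by Proposition~1.4.1 of \cite{bertsekas}; hence the first piece is $o(|\tau_k - \tau_\sigma|)$ and can be folded into $\zeta_k|\tau_k - \tau_\sigma|$. For the second piece, weak duality on \textsc{ls}$(\tau_k)$ gives $b^\top \overline{y}_{\tau_k} - \tau_k\overline{\lambda}_{\tau_k} \le \varphi(\tau_k) \le \overline{\varphi}(\tau_k)$, so $0 \le \overline{\varphi}(\tau_k) - \varphi(\tau_k) \le \eta_k$; combined with the uniform bound $|\overline{\varphi}'(\tau_k)| \ge \tfrac{1}{2}|\varphi'(\tau_\sigma)|$ (valid for $\tau_k$ close to $\tau_\sigma$ and $\eta_k$ small, using that $A$ has full rank and $\tau_\sigma < \tau_{\textsc{bp}}$), this delivers the $\gamma_1\eta_k$ term with $\gamma_1 \approx 1/|\varphi'(\tau_\sigma)|$. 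For the third piece, $|\sigma - \varphi(\tau_k)| = O(|\tau_k - \tau_\sigma|)$ by the mean value theorem, while $|1/\overline{\varphi}'(\tau_k) - 1/\varphi'(\tau_k)| \to 0$ as $\eta_k \to 0$, so its contribution also joins $\zeta_k|\tau_k - \tau_\sigma|$.

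The main obstacle is making rigorous the slope-convergence claim $|\overline{\lambda}_{\tau_k} - \lambda_{\tau_k}| \to 0$ whenever $\eta_k \to 0$. This requires that the approximate primal-dual pair produced by the inner solver converges to the exact primal-dual optimum of \textsc{ls}$(\tau_k)$ under a vanishing duality gap, which in turn uses uniqueness of the \textsc{ls}$(\tau_k)$ solution together with continuity of the map $x \mapsto \lVert A^\top(Ax-b)\rVert_\infty / \lVert Ax-b\rVert_2$ away from $Ax=b$. The full-rank assumption on $A$ and $\sigma \in (0,\lVert b\rVert_2)$ (which forces $r_{\tau_\sigma}\ne 0$) ensure that this map is well-defined and locally Lipschitz near $x^*_{\tau_k}$, closing the argument and yielding $\zeta_k \to 0$ as claimed.
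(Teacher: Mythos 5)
The paper does not actually prove this theorem: it is quoted verbatim from van den Berg and Friedlander, with the sentence ``The authors of \cite{SPGL1} have proved the following convergence result'' immediately preceding it, so there is no in-paper proof to compare against. Judged on its own, your reconstruction follows the same route as the cited source: split the inexact-Newton error into the exact-Newton error (killed superlinearly via Proposition~1.4.1 of \cite{bertsekas}, using that $\varphi$ is convex, $C^1$, and has $\varphi'(\tau_\sigma)=-\lVert A^{\top}y_{\tau_\sigma}\rVert_\infty<0$ by full rank and $\sigma<\lVert b\rVert_2$), a function-value mismatch controlled by the duality gap via weak duality (which correctly yields the $\gamma_1\eta_k$ term, as an upper bound --- the ``$=$'' in the statement is best read as defining $\zeta_k$ to absorb the slack), and a derivative mismatch folded into $\zeta_k|\tau_k-\tau_\sigma|$. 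One refinement on the step you flag as the main obstacle: you should not appeal to uniqueness of the \textsc{ls}$(\tau_k)$ \emph{solution}, since that is only guaranteed in this paper under \textsc{rip} (via Theorem~3.2 of \cite{OPT}), which is not among the hypotheses here. What you actually need, and what full rank does give, is uniqueness of the optimal \emph{residual} $r_{\tau}$: $\lVert z-b\rVert_2$ is strictly convex in $z$ on the compact convex set $A\{\lVert x\rVert_1\le\tau\}$, so a vanishing duality gap forces $A\overline{x}_{\tau_k}\to Ax_{\tau_k}$, hence $\overline{y}_{\tau_k}\to y_{\tau_k}$ (using $r_{\tau_\sigma}\ne 0$) and $\overline{\lambda}_{\tau_k}\to\lambda_{\tau_k}$, with the required local uniformity in $\tau$ near $\tau_\sigma$ coming from continuity of $\tau\mapsto r_\tau$ on $(0,\tau_{\textsc{bp}})$. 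With that substitution your argument closes, and it also supplies the lower bound on $|\overline{\varphi}'(\tau_k)|$ that your $\gamma_1$ depends on.
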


\subsection{Solving the LASSO problem}
\label{sec:SolveLASSO}
Approximating $\varphi(\tau_{k})$ and $\varphi^{\prime}(\tau_{k})$ require
approximately minimizing \textsc{ls}$(\tau)$. The solver \textsc{spgl1}
uses a spectral projected-gradient (\textsc{spg}) algorithm. T	he method follows the
algorithm by Birgin, Mart\'{\i}nez, and Raydan \cite{Birgin} and is shown to
be globally convergent. The costs include evaluating $Ax$, $A^{\top}r$, and a
projection onto the $1$-norm ball $\left\Vert x\right\Vert _{1}\leq\tau$. In
\textsc{parnes}, we replace this \textsc{spg} algorithm with our algorithm, \textsc{nesta}-\textsc{lasso}
(cf.\ Algorithm~\ref{alg:nesta-lasso}).

\begin{algorithm}[htbp]
\caption{\textsc{parnes}: Pareto curve method with \textsc{nesta}-\textsc{lasso}}
\label{alg:parnes}
\begin{algorithmic}[1]
\REQUIRE{initial point $x_0$, \textsc{bpdn} parameter $\sigma$, tolerance $\eta$.}
\ENSURE{$x_{\sigma} = \operatorname*{argmin} \{\lVert x \rVert_1 : \lVert Ax - b \rVert_2 \le \sigma \}$}
\STATE $\tau_{0}=0$, $\varphi_{0} = \lVert b \rVert_2$, $\varphi_{0}' = \lVert A^\top b \rVert_\infty$;
\FOR{$k=0,\dots,k_{\max}$,}
\STATE $\tau_{k+1} = \tau_{k}+(\sigma-\varphi_{k})/\varphi_{k}'$;
\STATE $x_{k+1} = $ \textsc{nesta}-\textsc{lasso}$(x_{k},\tau_{k+1},\eta)$;
\STATE $r_{k+1} = b - Ax_{k+1}$;
\STATE $\varphi_{k+1} = \lVert r_{k+1}\rVert_{2}$;
\STATE $\varphi_{k+1}' = -\lVert A^\top r_{k+1}\rVert_{\infty}/\lVert r_{k+1}\rVert_{2}$;
\IF{$\lVert r_{k+1}\rVert_{2} -\sigma \leq\eta \cdot \max\{1,\lVert r_{k+1}\rVert_{2}\}$}
\RETURN $x_{\sigma} = x_{k+1}$;
\ENDIF
\ENDFOR
\end{algorithmic}
\end{algorithm}

\section{Other solution techniques and tools\label{sec:SOLVERS}}

In the our numerical experiments, we compare \textsc{parnes} with other state-of-the-art methods. The algorithms we test and their experimental details are described below. Note that the algorithms either solve \textsc{bp}$(\sigma)$ or \textsc{qp}$(\lambda)$.

\subsection{NESTA \cite{NESTA}}
\label{sec:NESTA_INFO}
\textsc{NESTA} is used to solve \textsc{bp}$(\sigma)$. Its code is available at
\url{http://www.acm.caltech.edu/~nesta}. The parameters for \textsc{nesta} are
set to be
\[
x_{0} = A^{\top}b,\quad\mu= 0.02,
\]
where $x_{0}$ is the initial guess and $\mu$ is the smoothing parameter for
the $1$-norm function in \textsc{bp}$(\sigma)$.

Continuation techniques are used to speed up \textsc{nesta} in
\cite{NESTA}. Such techniques are useful when it is observed that a problem
involving some parameter $\lambda$ is faster for large $\lambda$, \cite{homotopy1,fpc1}. Thus, the idea of continuation is to solve a sequence of problems
for decreasing values of $\lambda$. In the case of \textsc{nesta}, it is
observed that convergence is faster for larger values of $\mu$. When
continuation is used in the experiments, there are four continuation steps
with $\mu_{0} = \|x_{0}\|_{\infty}$ and $\mu_{t} = (\mu/\mu_{0})^{t/4}\mu_{0}$
for $t = 1,2,3,4$.

\subsection{GPSR: Gradient projection for sparse reconstruction \cite{GPSR}}
\label{sec:GPSR}
\textsc{gpsr} is used to solve the penalized least-squares problem
\textsc{qp}$(\lambda)$. The code is available at
\url{http://www.lx.it.pt/~mtf/GPSR}. The problem is first recast as a
bound-constrained quadratic program (\textsc{bcqp}) by using a standard change
of variables on $x$. Here, $x = u_{1}-u_{2}$, and the variables are now given
by $[u_{1},u_{2}]$ where the entries are positive. The new problem is then
solved using a gradient projection (\textsc{gp}) algorithm. The parameters are set to
the default values in the following experiments.

A version of \textsc{gpsr} with continuation is also tested. The number of
continuation steps is 40, the variable \textsc{tolerancea} is set to $10^{-3}$,
and the variable \textsc{minitera} is set to 1. All other parameters are set
to their default values. 

\subsection{SpaRSA: Sparse reconstruction by separable approximation
\cite{SpaRSA}}
\label{sec:SpaRSA}
\textsc{sparsa} is used to minimize functions of the form $\phi(x) = f(x) +
\lambda c(x)$ where $f$ is smooth and $c$ is non-smooth and non-convex. The
\textsc{qp}$(\lambda)$ problem is a special case of functions of this form.
The code for \textsc{sparsa} is available at \url{http://www.lx.it.pt/~mtf/SpaRSA}.

In a sense, \textsc{sparsa} is an iterative shrinkage/thresholding algorithm. Utilizing continuation and a Brazilai-Borwein heuristic \cite{bb} to find step sizes, the speed of the algorithm can be
increased. The number of continuation steps is set to 40 and the variable
\textsc{minitera} is set to 1. All remaining variables are set to their
default values.

\subsection{SPGL1 \cite{SPGL1} and SPARCO \cite{SPARCO}}
\label{sec:SPGL1}
\textsc{SPGL1} is available at \url{http://www.cs.ubc.ca/labs/scl/spgl1}. The parameters for our numerical
experiments are set to their default values.

Due to the vast number of available and upcoming algorithms for sparse
reconstruction, the authors of \textsc{spgl1} and others have created
\textsc{sparco} \cite{SPARCO}. In \textsc{sparco}, they provide a much needed testing
framework for benchmarking algorithms. It consists of a large collection of
imaging, compressed sensing, and geophysics problems. Moreover, it includes a
library of standard operators which can be used to create new test problems.
\textsc{sparco} is implemented in \textsc{matlab} and was originally created to test \textsc{spgl1}. The toolbox is available at \url{http://www.cs.ubc.ca/labs/scl/sparco}.

\subsection{FISTA: Fast iterative soft-thresholding algorithm \cite{FISTA}}
\label{sec:FISTA}
\textsc{fista} solves \textsc{qp}$(\lambda)$. It can be thought of as a
simplified version of the Nesterov algorithm in Section~\ref{sec:NESTEROV} since it involves two sequences of iterates
instead of three. In Section~4.2 of \cite{NESTA}, \textsc{fista} is shown to give very accurate solutions provided enough iterations are taken. Due to its ease of use and accuracy, \textsc{fista} is used to compute reference solutions in \cite{NESTA} and in this paper. The code for \textsc{fista} can be found in the \textsc{nesta} experiments code at \url{http://www.acm.caltech.edu/~nesta}.

\subsection{FPC: Fixed point continuation \cite{fpc1,fpc2}}
\label{sec:FPC}
\textsc{fpc} solves the general problem $\min_{x} \left\|  x\right\| _{1}
+\lambda f(x)$ where $f(x)$ is differentiable and convex. The
special case with $f(x) = \frac{1}{2}\left\|  Ax-b\right\| _{2}^{2}$ is the
\textsc{qp}$(\lambda)$ problem. The algorithm is available at \url{http://www.caam.rice.edu/~optimization/L1/fpc}.

\textsc{FPC} is equivalent to iterative soft-thresholding. The approach is based on the observation that the solution solves a fixed-point equation $x = F(x)$ where the operator $F$ is a composition of a
gradient descent-like operator and a shrinkage operator. It can be shown that
the algorithm has $q$-linear convergence and also, finite-convergence for some
components of the solution. Since the parameter $\lambda$ affects the speed of
convergence, continuation techniques are used to slowly decrease $\lambda$ for
faster convergence. A more recent version of \textsc{fpc}, \textsc{fpc-bb},
uses Brazilai-Borwein steps to speed up convergence. Both versions of
\textsc{fpc} are tested with their default parameters.

\subsection{FPC-AS: Fixed-point continuation and active set \cite{FPC-AS}}
\label{sec:FPC-AS}
\textsc{fpc-as} is an extension of \textsc{fpc} into a two-stage algorithm
which solves \textsc{qp}$(\lambda)$. The code can be found at
\url{http://www.caam.rice.edu/~optimization/L1/fpc}. It has been shown in
\cite{fpc1} that applying the shrinkage operator a finite number of times
yields the support and signs of the optimal solution. Thus, the first stage of
\textsc{fpc-as} involves applying the shrinkage operator until an active set
is determined. In the second stage, the objective function is restricted to
the active set and $\left\| x\right\| _{1}$ is replaced by $c^{T}x$ where $c$
is the vector of signs of the active set. The constraint $c_{i}\cdot x_{i} >0$
is also added. Since the objective function is now smooth, many available
methods can now be used to solve the problem. In the following tests, the
solvers \textsc{l-bfgs} and conjugate gradients, \textsc{cg} (referred to as
\textsc{fpc-as} (\textsc{cg})), are used. Continuation methods are used to
decrease $\lambda$ to increase speed. For experiments involving approximately
sparse signals, the parameter controlling the estimated number of nonzeros is
set to $n$, and the maximum number of subspace iterations is set to 10. The
other parameters are set to their default values. All other experiments were
tested with the default parameters.

\subsection{Bregman iteration \cite{Bregman}}
\label{sec:Bregman}
The Bregman Iterative algorithm consists of solving a sequence of
\textsc{qp}$(\lambda)$ problems for a fixed $\lambda$ and updated observation
vectors $b$. Each \textsc{qp}$(\lambda)$ is solved using the Brazilai-Borwein
version of \textsc{fpc}. Typically, very few (around four) outer iterations
are needed. Code for the Bregman algorithm can be found at
\url{http://www.caam.rice.edu/~optimization/L1/2006/10/bregman-iter}\\
\url{ative-algorithms-for.html}. All parameters are set to their default values.

\subsection{C-SALSA \cite{SALSA1,SALSA}}
\label{sec:SALSA}
This state-of-the-art method solves \textsc{bp}$(\sigma)$ and has been shown to be competitive with \textsc{spgl1} and \textsc{nesta}. The method solves the general constrained optimization problem 
\[\min_x \phi(x) \text{ s.t. }\|Ax-b\|_2\leq\epsilon.\] 
First, the method transforms the problem into an unconstrained problem which is then transformed into a different constrained problem and then solved with an augmented Lagrangian scheme. 

The algorithm requires a method to compute the inverse of $(A^{\top}A + \alpha I)$ with $\alpha > 0$ and an efficient method for computing the denoising operator associated with $\phi$. We have hand-tuned the parameters $\mu_1$ and $\mu_2$ for optimal performance. The code for $\textsc{c-salsa}$ can be found at \url{http://cascais.lx.it.pt/~mafonso/salsa.html}.

\section{Numerical results\label{sec:NUM}}
In the \textsc{nesta} paper \cite{NESTA} extensive experiments are carried out, comparing the effectiveness of the state-of-the-art sparse reconstruction algorithms described in Section~\ref{sec:SOLVERS}. The code used to run these experiments is available at \url{http://www.acm.caltech.edu/~nesta}. We have modified this \textsc{nesta} experiment infrastructure to include \textsc{parnes} and \textsc{c-salsa}, and we repeat some of the tests in \cite{NESTA} using the same experimental standards and parameters. Refer to the \cite{NESTA} for a detailed description of the experiments. 

One difficulty that arises in carrying out such broad experiments is that some of the algorithms solve \textsc{qp}$(\lambda)$ whereas others solve \textsc{bp}$(\sigma)$. Comparing the algorithms
thus requires a way of finding a $(\sigma,\lambda)$ pair for which the
solutions of \textsc{qp}$(\lambda)$ and \textsc{bp}$(\sigma)$ coincide. The \textsc{nesta} experiments utilize a two-step procedure. Given the noise level $\epsilon$, the authors choose $\sigma_0 := \sqrt{m+2\sqrt{2m}}\epsilon$, and then use \textsc{spgl1} to solve the corresponding $\textsc{bp}(\sigma_0)$ problem. The \textsc{spgl1} dual solution then provides an estimate of the corresponding $\lambda$. In practice, the computation of $\lambda$ is not very stable, and so a second step is performed in which \textsc{fista} is used to compute a $\sigma$ corresponding to $\lambda$ using a very high accuracy of around $10^{-14}$. 

The highly accurate solution computed by \textsc{fista} is used to determine the accuracy of the solutions computed by the other solvers. Section~4.2 of \cite{NESTA} shows that this is reasonable since \textsc{fista} gives very accurate solutions provided that enough iterations are taken. For each test, \textsc{fista} is ran twice. In the first run, \textsc{fista} is ran with no limit on the number of iterations until the relative change in the function value is less than $10^{-14}$. This solution is used to determine the accuracy of the computed solutions. The results recorded for \textsc{fista} are from running \textsc{fista} a second time with either stopping criterion \eqref{stopcond} or \eqref{stopcond3}.

Since the different algorithms utilize different stopping criteria, to maintain fairness, the codes have been modified to allow for two new stopping criteria. Intuitively, the algorithms are run until they achieve a solution at least as accurate as the one obtained by \textsc{nesta}. In \cite{NESTA}, \textsc{nesta} (with continuation) is used to compute a solution $x_{\text{NES}}$. Let $\hat{x}_{k}$ be the $k$-th iteration in the algorithm being tested. The stopping criteria used are:

\begin{equation}
\Vert\hat{x}_{k}\Vert_{\ell_{1}}\leq\Vert x_{\text{NES}}\Vert_{\ell_{1}}%
\quad\text{and}\quad\Vert b-A\hat{x}_{k}\Vert_{\ell_{2}}\leq1.05\,\Vert
b-Ax_{\text{NES}}\Vert_{\ell_{2}}, \label{stopcond}%
\end{equation}
and
\begin{equation}
\lambda\Vert\hat{x}_{k}\Vert_{\ell_{1}}+\frac{1}{2}\Vert A\hat{x}_{k}%
-b\Vert_{\ell_{2}}^{2}\leq\lambda\Vert x_{\text{NES}}\Vert_{\ell_{1}}+\frac
{1}{2}\Vert Ax_{\text{NES}}-b\Vert_{\ell_{2}}^{2}. \label{stopcond3}%
\end{equation}

The rationale for having two stopping criteria is to reduce any potential bias arising from the fact that some algorithms solve \textsc{qp}$(\lambda)$, for which \eqref{stopcond3} is the most natural, while
others solve \textsc{bp}$(\sigma)$, for which \eqref{stopcond} is the most
natural. It is evident from the tables below that there is not a significant
difference between using \eqref{stopcond} and \eqref{stopcond3}. For each test, the number of calls to $A$ and $A^{\top}$ $(N_A)$ is recorded, and the algorithms are said to have not converged (\textsc{dnc}) if the number of calls exceeds 20,000.

In Tables~\ref{table51} and \ref{table52}, we repeat the experiments done in
Tables~5.1 and 5.2 of \cite{NESTA}. These experiments
involve recovering an unknown, exactly $s$-sparse signal with $n = $
262,144, $m = n/8$, and $s = m/5$. For each run, the measurement operator $A$ is a randomly subsampled discrete
cosine transform, and the noise level is set to $0.1$. The experiments are performed with
increasing values of the dynamic range $d$ where $d = 20, 40, 60, 80, 100$ dB.

The dynamic range $d$ is a measure of the ratio between the
largest and smallest magnitudes of the non-zero coefficients of the unknown
signal. Problems with a high dynamic range occur often in applications. In
these cases, high accuracy becomes important since one must be able to detect
and recover low-power signals with small amplitudes which may be obscured by
high-power signals with large amplitudes. 

Table~\ref{table41} compares the accuracy of the different solvers when used to calculate the results in the last column of Table~\ref{table51}. As this corresponds to a very high dynamic range (100 dB), one hopes to obtain very accurate results. Although \textsc{fista} produces the most accurate results ($\|x-x^*\|_1/\|x^*\|_1 = 3.63\cdot 10^{-4}$), with at least twice the accuracy of the other solvers, it requires the over 10,000 calls to $A$ and $A^{\top}$. In contrast, \textsc{parnes} only requires 632 function calls to reach a relative accuracy of $\|x-x^*\|_1/\|x^*\|_1 = 6.93\cdot 10^{-4}$. The solvers \textsc{fpc-as} and \textsc{fpc-as (cg)} do well and only require around 300 iterations to reach a relative accuracy of around $6.93\cdot 10^{-4}$. The remaining algorithms reach relative accuracies of around $8\cdot 10^{-4}$ or more, and \textsc{gspr} does not converge. Without continuation, \textsc{nesta} only achieves a relative accuracy of $4.12\cdot 10^{-3}$ after 15,227 function calls. However, \textsc{nesta} with continuation does much better and reaches a relative accuracy of $8.12\cdot 10^{-4}$ after 787 function calls.

In Tables~\ref{table51} and \ref{table52}, the same experiment is ran for the two stopping criteria. Since there is not a notable difference between the two sets of results, we only analyze Table~\ref{table51}. Here, \textsc{fpc-as} and \textsc{fpc-as (cg)} perform the best for large values of $d$, and the number of function calls mostly range from 200 to 375 for all values of the dynamic range. In these cases, we see a relatively small increase in $N_A$ as $d$ increases from 20 dB to 100 dB. Our method, \textsc{parnes}, and \textsc{spgl1} generally perform well and do particularly well for small values of $d$. However, both exhibit a larger increase in $N_A$ with $d$, with \textsc{parnes} increasing from 122 to 632 function calls and \textsc{spgl1} ranging between 58 and 504. The solvers \textsc{nesta + ct} and \textsc{sparsa} perform relatively well for large values of $d$ with $N_A$ ranging between 500 and 800.

In applications, the signal to be recovered is often approximately sparse rather than exactly sparse. Again, high accuracy is
important when solving these problems. The last two tables, Tables~\ref{table53} and \ref{table53-1}, replicate
Tables 5.3 and 5.4 of \cite{NESTA}. Each run involves an approximately sparse signal obtained from a permutation
of the Haar wavelet coefficients of a $512\times512$ image. The measurement
vector $b$ consists of $m = n/8 = 512^{2}/8 =$ 32,768 random discrete cosine
measurements, and the noise level is set to have a variance of $1$ in Table~\ref{table53} and $0.1$ in Table~\ref{table53-1}. For more specific details, refer to \cite{NESTA}. 

We have seen that \textsc{nesta + ct}, \textsc{sparsa}, \textsc{spgl1},
\textsc{parnes}, and both versions of \textsc{fpc-as} perform well in the case
of exactly sparse signals for all values of the dynamic range. However, in the
case of approximately sparse signals, \textsc{sparsa} and all versions of \textsc{fpc} no longer converge in under 20,000 function calls. In Table~\ref{table53}, \textsc{parnes}, \textsc{spgl1}, and \textsc{c-salsa} perform well, with \textsc{parnes} and \textsc{c-salsa} taking around 650 function calls for some runs (compare to \textsc{nesta + ct} which takes at least 3,000 iterations). These algorithms also perform the best in Table~\ref{table53-1}, and most other algorithms no longer converge in under 10,000 function calls.  

\begin{table}
\centering
\caption{Comparison of accuracy using experiments from Table \ref{table51}.
Dynamic range $100$ dB, $\sigma= 0.100$, $\mu= 0.020$, sparsity level $s =
m/5$. Stopping rule is \eqref{stopcond}.}%
\label{table41}%
\begin{tabular}{lllllll}%
\hline\noalign{\smallskip}
Methods & $N_{A}$ & $\lVert x \rVert_{1}$ & $\lVert Ax - b\rVert_{2}$ &
$\frac{\lVert x - x^{*} \rVert_{1}}{\lVert x^{*} \rVert_{1}}$ & $\lVert x -
x^{*}\rVert_{\infty}$ & $\lVert x - x^{*}\rVert_{2}$\\
\noalign{\smallskip}\hline\noalign{\smallskip}
\textsc{parnes} & $632 $ & $942197.606$ & $2.692$ & $0.000693$ & $8.312 $ &
$46.623 $\\
\textsc{nesta} & $15227$ & $942402.960$ & $2.661$ & $0.004124$ & $45.753 $ &
$255.778 $\\
\textsc{nesta + ct} & $787 $ & $942211.581$ & $2.661$ & $0.000812$ & $9.317 $
& $52.729 $\\
\textsc{gpsr} & \textsc{dnc} & \textsc{dnc} & \textsc{dnc} & \textsc{dnc} &
\textsc{dnc} & \textsc{dnc}\\
\textsc{gpsr + ct} & $11737$ & $942211.377$ & $2.725$ & $0.001420$ & $15.646 $
& $90.493 $\\
\textsc{sparsa} & $693 $ & $942197.785$ & $2.728$ & $0.000783$ & $9.094 $ &
$51.839 $\\
\textsc{spgl1} & $504 $ & $942211.520$ & $2.628$ & $0.001326$ & $14.806 $ &
$84.560 $\\
\textsc{fista} & $12462$ & $942211.540$ & $2.654$ & $0.000363$ & $4.358 $ &
$26.014 $\\
\textsc{fpc-as} & $287 $ & $942210.925$ & $2.498$ & $0.000672$ & $9.374 $ &
$45.071 $\\
\textsc{fpc-as} (\textsc{cg}) & $361 $ & $942210.512$ & $2.508$ & $0.000671$ &
$9.361 $ & $45.010 $\\
\textsc{fpc} & $9614 $ & $942211.540$ & $2.719$ & $0.001422$ & $15.752 $ &
$90.665 $\\
\textsc{fpc-bb} & $1082 $ & $942209.854$ & $2.726$ & $0.001378$ & $15.271 $ &
$87.963 $\\
\textsc{bregman-bb} & $1408 $ & $942286.656$ & $1.326$ & $0.000891$ & $9.303 $
& $52.449 $ \\
\textsc{c-salsa} & $1338 $ & $942219.455$ & $2.317$ & $0.000851$ & $9.541 $
& $55.14 $ \\
\noalign{\smallskip}\hline
\end{tabular}
\end{table}

\begin{table}
\centering
\caption{Number of function calls where the sparsity level is $s = m/5$ and
the stopping rule is \eqref{stopcond}.}%
\label{table51}
\begin{tabular}{llllll}%
\hline\noalign{\smallskip}
Method & 20 dB & 40 dB & 60 dB & 80 dB & 100 dB\\
\noalign{\smallskip}\hline\noalign{\smallskip}
\textsc{parnes} & 122 & 172 & 214 & 470 & 632\\
\textsc{nesta} & 383 & 809 & 1639 & 4341 & 15227\\
\textsc{nesta + ct} & 483 & 513 & 583 & 685 & 787\\
\textsc{gpsr} & 64 & 622 & 5030 & \textsc{dnc} & \textsc{dnc}\\
\textsc{gpsr + ct} & 271 & 219 & 357 & 1219 & 11737\\
\textsc{sparsa} & 323 & 387 & 465 & 541 & 693\\
\textsc{spgl1} & 58 & 102 & 191 & 374 & 504\\
\textsc{fista} & 69 & 267 & 1020 & 3465 & 12462\\
\textsc{fpc-as} & 209 & 231 & 299 & 371 & 287\\
\textsc{fpc-as} (\textsc{cg}) & 253 & 289 & 375 & 481 & 361\\
\textsc{fpc} & 474 & 386 & 478 & 1068 & 9614\\
\textsc{fpc-bb} & 164 & 168 & 206 & 278 & 1082\\
\textsc{bregman-bb} & 211 & 223 & 309 & 455 & 1408\\
\textsc{c-salsa} & 242 & 602 & 702 & 970 & 1338\\
\noalign{\smallskip}\hline
\end{tabular}
\end{table}

\begin{table}
\centering
\caption{Number of function calls where the sparsity level is $s = m/5$ and
the stopping rule is \eqref{stopcond3}.}%
\label{table52}
\begin{tabular}{llllll}%
\hline\noalign{\smallskip}
Method & 20 dB & 40 dB & 60 dB & 80 dB & 100 dB\\
\noalign{\smallskip}\hline\noalign{\smallskip}
\textsc{parnes} & 74 & 116 & 166 & 364 & 562\\
\textsc{nesta} & 383 & 809 & 1639 & 4341 & 15227\\
\textsc{nesta + ct} & 483 & 513 & 583 & 685 & 787\\
\textsc{gpsr} & 62 & 618 & 5026 & \textsc{dnc} & \textsc{dnc}\\
\textsc{gpsr + ct} & 271 & 219 & 369 & 1237 & 11775\\
\textsc{sparsa} & 323 & 387 & 463 & 541 & 689\\
\textsc{spgl1} & 43 & 99 & 185 & 365 & 488\\
\textsc{fista} & 72 & 261 & 1002 & 3477 & 12462\\
\textsc{fpc-as} & 115 & 167 & 159 & 371 & 281\\
\textsc{fpc-as} (\textsc{cg}) & 142 & 210 & 198 & 481 & 355\\
\textsc{fpc} & 472 & 386 & 466 & 1144 & 9734\\
\textsc{fpc-bb} & 164 & 164 & 202 & 276 & 1092\\
\textsc{bregman-bb} & 211 & 223 & 309 & 455 & 1408\\
\textsc{c-salsa} & 202 & 550 & 650 & 898 & 1230\\
\noalign{\smallskip}\hline
\end{tabular}
\end{table}

\begin{table}
\centering
\caption{Recovery results of an approximately sparse signal (with Gaussian
noise of variance $1$ added) and with \eqref{stopcond3} as a stopping rule.}%
\label{table53}
\begin{tabular}{llllll}%
\hline\noalign{\smallskip}
Method & Run 1 & Run 2 & Run 3 & Run 4 & Run 5\\
\noalign{\smallskip}\hline\noalign{\smallskip}
\textsc{parnes} & 838 & 810 & 1038 & 1098 & 654\\
\textsc{nesta} & 8817 & 10867 & 9887 & 9093 & 11211\\
\textsc{nesta + ct} & 3807 & 3045 & 3047 & 3225 & 2735\\
\textsc{gpsr} & \textsc{dnc} & \textsc{dnc} & \textsc{dnc} & \textsc{dnc} &
\textsc{dnc}\\
\textsc{gpsr + ct} & \textsc{dnc} & \textsc{dnc} & \textsc{dnc} & \textsc{dnc}
& \textsc{dnc}\\
\textsc{sparsa} & 2143 & 2353 & 1977 & 1613 & \textsc{dnc}\\
\textsc{spgl1} & 916 & 892 & 1115 & 1437 & 938\\
\textsc{fista} & 3375 & 2940 & 2748 & 2538 & 3855\\
\textsc{fpc-as} & \textsc{dnc} & \textsc{dnc} & \textsc{dnc} & \textsc{dnc} &
\textsc{dnc}\\
\textsc{fpc-as} (\textsc{cg}) & \textsc{dnc} & \textsc{dnc} & \textsc{dnc} &
\textsc{dnc} & \textsc{dnc}\\
\textsc{fpc} & \textsc{dnc} & \textsc{dnc} & \textsc{dnc} & \textsc{dnc} &
\textsc{dnc}\\
\textsc{fpc-bb} & 5614 & 7906 & 5986 & 4652 & 6906\\
\textsc{bregman-bb} & 3288 & 1281 & 1507 & 2892 & 3104\\
\textsc{c-salsa} & 742 & 626 & 630 & 1226 & 826 \\
\noalign{\smallskip}\hline
\end{tabular}
\end{table}

\begin{table}
\centering
\caption{Recovery results of an approximately sparse signal (with Gaussian
noise of variance $0.1$ added) and with \eqref{stopcond3} as a stopping rule.}%
\label{table53-1}
\begin{tabular}{llllll}%
\hline\noalign{\smallskip}
Method & Run 1 & Run 2 & Run 3 & Run 4 & Run 5\\
\noalign{\smallskip}\hline\noalign{\smallskip}
\textsc{parnes} & 1420 & 1772 & 1246 & 1008 & 978\\
\textsc{nesta} & 11573 & 10457 & 10705 & 8807 & 13795\\
\textsc{nesta + ct} & 7543 & 13655 & 11515 & 3123 & 2777\\
\textsc{gpsr} & \textsc{dnc} & \textsc{dnc} & \textsc{dnc} & \textsc{dnc} &
\textsc{dnc}\\
\textsc{gpsr + ct} & \textsc{dnc} & \textsc{dnc} & \textsc{dnc} & \textsc{dnc}
& \textsc{dnc}\\
\textsc{sparsa} & 12509 & \textsc{dnc} & \textsc{dnc} & 3117 & \textsc{dnc}\\
\textsc{spgl1} & 1652 & 1955 & 2151 & 1311 & 2365\\
\textsc{fista} & 10845 & 12165 & 10050 & 7647 & 11997\\
\textsc{fpc-as} & \textsc{dnc} & \textsc{dnc} & \textsc{dnc} & \textsc{dnc} &
\textsc{dnc}\\
\textsc{fpc-as} (\textsc{cg}) & \textsc{dnc} & \textsc{dnc} & \textsc{dnc} &
\textsc{dnc} & \textsc{dnc}\\
\textsc{fpc} & \textsc{dnc} & \textsc{dnc} & \textsc{dnc} & \textsc{dnc} &
\textsc{dnc}\\
\textsc{fpc-bb} & \textsc{dnc} & \textsc{dnc} & \textsc{dnc} & \textsc{dnc} &
\textsc{dnc}\\
\textsc{bregman-bb} & 3900 & 3684 & 2045 & 3292 & 3486\\
\textsc{c-salsa} & 1886 & 1926 & 1770 & 1754 & 1854\\
\noalign{\smallskip}\hline
\end{tabular}
\end{table}

\subsection{Choice of parameters}
\label{sec:parameters}
As Tseng observed, accelerated proximal gradient algorithms will converge so
long as the condition given as equation (45) in \cite{Tseng} is satisfied. In
our case this translates into
\begin{equation}
\label{eq:Tseng}\min_{x \in\mathbb{R}^{n}}\left\{  \nabla f(y_{k})^{\top}x +
\frac{L}{2}\lVert x-x_{k}\rVert_{2}^{2} + P(x)\right\}  \geq\nabla
f(y_{k})^{\top}y_{k} + P(y_{k}),
\end{equation}
upon setting $\gamma_{k}=1$ and
\[
P(x) =
\begin{cases}
0 & \text{if } \lVert x \rVert_{1} \le\tau,\\
\infty & \text{otherwise},
\end{cases}
\]
in (45) in \cite{Tseng}. In other words, the value of $L$ need not necessarily
be fixed at the Lipschitz constant of $\nabla f$ but may be decreased, and 
decreasing $L$ has the same effect as increasing the stepsize. Tseng suggests
to decrease $L$ adaptively by a constant factor until (45) is violated,
then backtrack and repeat the iteration (cf.\ Note 6 in \cite{Tseng}). For
simplicity, and very likely at the expense of speed, we do not change our $L$
adaptively in \textsc{parnes} and \textsc{nesta}-\textsc{lasso}. Instead, we choose a small fixed $L$ by
trying a few different values so that \eqref{eq:Tseng} is satisfied for all
$k$ and likewise for the tolerance $\eta$ in Algorithm~\ref{alg:nesta-lasso}. However, even with this crude way of selecting $L$ and $\eta$, the results
obtained are still rather encouraging.

\section{Conclusions}
\label{sec:conclusion}
As seen in the numerical results, \textsc{spgl1} and \textsc{nesta} are among some of the top performing solvers available for basis pursuit denoising problems. We have therefore made use of Nesterov's accelerated proximal gradient method in our algorithm \textsc{nesta}-\textsc{lasso} and shown that updating the prox-center leads to improved results. Through our experiments, we have shown that using \textsc{nesta}-\textsc{lasso} in the Pareto root-finding method leads to results comparable to those of currently available state-of-the-art methods. Moreover, \textsc{parnes} performs consistently well in all our experiments.


\begin{acknowledgements}
We would like to give special thanks to Emmanuel Cand\`{e}s for helpful
discussions and ideas. The numerical experiments in this paper rely on the
shell scripts and \textsc{matlab}
codes\footnote{\url{http://www.acm.caltech.edu/~nesta/NESTA_ExperimentPackage.zip}}
of J\'{e}r\^{o}me Bobin. We have also benefited from Michael Friedlander and
Ewout van den Berg's \textsc{matlab}
codes\footnote{\url{http://www.cs.ubc.ca/labs/scl/spgl1}} for \textsc{spgl1}.
We are grateful to them for generously making their codes available on the web. Lastly, we would like to thank Dan Gardiner for pointing out the results in \cite{measure0} which we use in the proof of Theorem \ref{remark:direction}. 
\end{acknowledgements}

%
%


\begin{thebibliography}{}  


\bibitem {SALSA}Afonso, M., Bioucas-Dias, J., Figueiredo, M.: Fast image recovery using variable splitting and constrained optimization. IEEE Transactions on Image Processing \textbf{19}(9) 2345-2356 (2010)

\bibitem {SALSA1}Afonso, M., Bioucas-Dias, J., Figueiredo, M.: Fast frame-based image deconvolution using variable splitting and constrained optimization. IEEE/SP 15th Workshop on Statistical Signal Processing, 2009. SSP '09, 109-112 (2009)
                                                                                             
\bibitem {bb}Barzilai, J., Borwein, J.: Two point step size gradient method. IMA J. Numer. Anal. \textbf{8}(1), 141-148 (1988)

\bibitem {FISTA}Beck, A., Teboulle, M.: Fast iterative shrinkage-thresholding algorithm for linear inverse problems. SIAM
J. Imaging Sci. \textbf{2}(1), 183-202 (2009)

\bibitem {NESTA}Becker, S., Bobin, J., Cand\`{e}s, E. J.: NESTA: a
fast and accurate first-order method for sparse recovery. SIAM
J. Imaging Sci. \textbf{4}(1), 1-39 (2011)

\bibitem {bertsekas}Bertsekas, D. P.: Nonlinear Programming. Belmont, MA (1999)

\bibitem {Birgin}Birgin, E., Mart\'{\i}nez, J., Raydan, M.: Nonmonotone spectral projected-gradient methods on convex sets. SIAM J. Optim. \textbf{10}(4), 1196--1211 (2000)

\bibitem {astro}Bobin, J., Stark, J.-L., Ottensamer, R.: Compressed sensing
in astronomy. IEEE J. Selected Top. Signal Process. \textbf{2}(5), 718-726 (2008)

\bibitem {RIP4}Cand\`{e}s, E. J., Tao, T.: Decoding by linear programming.
IEEE Trans.\ Inform.\ Theory \textbf{51}(12), 4203-4215 (2005)

\bibitem {DS}Cand\`{e}s, E. J., Tao, T.: The Dantzig selector: statistical
estimation when $p$ is much larger than $n$. Ann.\ Statist. \textbf{35}(6), 2313-2351 (2007)

\bibitem {RIP}Cand\`{e}s, E. J.: The restricted isometry property and its
implications for compressed sensing. C.\ R.\ Math.\ Acad.\ Sci.\ Paris \textbf{346}(9-10), 589-592 (2008)

\bibitem {RIP1}Cand\`{e}s, E. J., Romberg, J., Tao, T.: Stable signal
recovery from incomplete and inaccurate measurements. Comm.\ Pure
Appl.\ Math. \textbf{59}(8), 1207-1223 (2006)

\bibitem {inexactNewt}Dembo, R. S., Eisenstat, S. C., Steihaug, T.: Inexact
newton methods. SIAM J.\ Numer.\ Anal. \textbf{19}(2), 400-408 (1982)

\bibitem {BP}Chen, S., Donoho, D. L., Saunders, M.: Atomic decomposition by
basis pursuit. SIAM J.\ Sci.\ Comput. \textbf{20}(1), 33-61 (1998)

\bibitem {RIP2}Donoho, D. L.: For most large underdetermined systems of
linear equations the $\ell_{1}$-norm solution is also the sparsest solution.
Comm.\ Pure Appl.\ Math. \textbf{59}(6), 797-829 (2006)

\bibitem {L1proj}Duchi, J., Shalev-Shwartz, S., Singer, Y., Chandra, T.:
Efficient projections onto the $\ell_{1}$-ball for learning.
Proc.\ Int.\ Conf.\ Mach.\ Learn. (ICML '08) \textbf{25}(307), 272-279 (2008)

\bibitem {LARegression}Efron, B., Hastie, T., Johnstone, I., Tibshirani, R.:
Least angle regression. Ann.\ Statist. \textbf{32}(2), 407-499 (2004)

\bibitem {GPSR}Figueiredo, M., Nowak, R., Wright, S.: Gradient projection
for sparse reconstruction: Application to compressed sensing and other inverse
problems. IEEE J.\ Selected Top.\ Signal Process. \textbf{1}(4), 586-597 (2007)

\bibitem {SpaRSA}Figueiredo, M., Nowak, R., Wright, S.: Sparse
reconstruction by separable approximation. IEEE Trans.\ Signal
Process. \textbf{57}{7}, 2479 -2493 (2009)

\bibitem{Pathwise}Friedman, J., Hastie, T., H{\"o}fling, H., Tibshirani, R.: Pathwise coordinate optimization.
 Ann. Appl. Stat. \textbf{1}(2), 302-332, (2007)

\bibitem{LassoOpt}Fuchs, J. J.: On sparse representations in arbitrary redundant bases. IEEE Trans. Inf. Th. 1344 (2004)

\bibitem {NP1}Garey, M. R., Johnson, D.S.: Computers and
Intractability. A guide to the theory of NP-completeness. W.~H.~Freeman, New
York, NY (1979)

\bibitem {fpc1}Hale, E. T., Yin, W., Zhang, Y.: A fixed-point continuation
method for $\ell_{1}$-regularized minimization with applications to compressed
sensing. Rice University Technical Report (2007)

\bibitem {fpc2}Hale, E. T., Yin, W., Zhang, Y.: Fixed-point continuation
for $\ell_{1}$-minimization: Methodology and convergence. SIAM
J.\ Optimization \textbf{19}(3), 1107-1130 (2008)

\bibitem {Seismic2}Hennenfent, G., Herrmann, F. J.: Simply denoise:
wavefield reconstruction via jittered undersampling. Geophysics
\textbf{73}(3), V19-V28 (2008)

\bibitem {Seismic1}Hennenfent, G., Herrmann, F. J.: Sparseness-constrained
data continuation with frames: Applications to missing traces and aliased
signals in 2/3-D. SEG Tech.\ Program Expanded Abstracts \textbf{24}(1), 2162-2165 (2005)

\bibitem {NP2}Natarajan, B. K.: Sparse approximate solutions to linear
systems. SIAM J.\ Comput. \textbf{24}(2), 227-234 (1995)

\bibitem {Nes2}Nesterov, Y.: A method for solving the convex programming
problem with convergence rate $O(1/k^{2})$. Dokl.\ Akad.\ Nauk
SSSR \textbf{269}(3), 543-547 (1983)

\bibitem {Nes1}Nesterov, Y.: Smooth minimization of non-smooth functions. Math.\ Program. \textbf{103}(1), 127-152 (2005)

\bibitem {OPT}Osborne, M. R., Presnell, B., Turlach, B. A.: On the
\textsc{lasso} and its dual. J.\ Comput.\ Graph.\ Statist. \textbf{9}(2), 319-337 (2000)

\bibitem {homotopy1}Osborne, M. R., Presnell, B., Turlach, B. A.: A new
approach to variable selection in least squares problems. IMA
J.\ Numer.\ Anal. \textbf{20}(3), 389-403 (2000)

\bibitem{measure0}Ponomarev, S.~P.: Submersions and preimages of sets of measure zero. Siberian Mathematical Journal.
\textbf{28}(1), 153-163 (1987).

\bibitem {LamTauSig1}Rockafellar, R. T.: Convex Analysis. Princeton
University Press, Princeton, NJ (1970)

\bibitem {imaging}Romberg, J.: Imaging via compressive sensing.
IEEE Trans.\ Signal Process. \textbf{25}(2), 14-20 (2008)

\bibitem {LS}Tibshirani, R.: Regression shrinkage and selection via the
\textsc{lasso}. J.\ Roy.\ Statist.\ Soc.\ Ser.\ B \textbf{58}(1), 267-288 (1996)

\bibitem {RIP3}Tropp, J. A.: Just relax: Convex programming methods for
identifying sparse signals in noise. IEEE Trans.\ Inform.\ Theory \textbf{52}(3), 1030-1051 (2006)

\bibitem {Tseng}Tseng, P.: On accelerated proximal gradient methods for
convex-concave optimization. Preprint (2008)

\bibitem {SPGL1}van den Berg, E., Friedlander, M. P.: Probing the Pareto
frontier for basis pursuit solutions. SIAM J.\ Sci.\ Comput. \textbf{31}(2), 890-912 (2008/09)

\bibitem {SPARCO}van den Berg, E., Friedlander,  M. P., Hennenfent, G.,
Herrmann, F. J., Saab, R., Yilmaz, \"{O}.: Algorithm 890: \textsc{sparco}:
a testing framework for sparse reconstruction. ACM Trans.\ Math.\ Software \textbf{35}(4) Art.~29, pp.~16 (2009)

\bibitem {FPC-AS}Wen, Z., Yin, W., Goldfarb, D., Zhang, Y.: A fast algorithm
for sparse reconstruction based on shrinkage, subspace optimization and
continuation. SIAM J.\ Sci.\ Comput. \textbf{32}(4), 1832 (2010)

\bibitem {Bregman}Yin, W., Osher, S., Goldfarb, D., Darbon, J.: Bregman
iterative algorithms for $l_{1}$ minimization with applications to compressed
sensing. SIAM J.\ Imaging Sci. \textbf{1}(1) 143-168 (2008)

\bibitem {PGMmonotonic}Yu, Y.~L.: Nesterov's optimal gradient method. \textbf{LLL, Jul. 30, 2009}, (2009), \url{http://webdocs.cs.ualberta.ca/~yaoliang/Non-smooth\%20Optimization.pdf}.

\end{thebibliography}
\end{document}